\documentclass[11pt]{amsart}
\pdfoutput=1

\usepackage{mathptmx}
\usepackage{amssymb}
\usepackage{tikz}
\usepackage{booktabs}
\usepackage{multicol}
\usepackage{placeins}
\usepackage{caption} 
\usepackage{subcaption} 
\usepackage{graphicx}
\usepackage{mathdots}
\usepackage{mathrsfs}
\usepackage{txfonts}
\usepackage{amsopn}
\usepackage{amsfonts}

\usetikzlibrary{arrows,automata, positioning, calc, matrix}
\captionsetup{compatibility=false}

\newtheorem{thm}{Theorem}[section]

\newtheorem{lem}[thm]{Lemma}

\newtheorem{defn}[thm]{Definition}

\definecolor{Red}{rgb}{1.0, 0, 0}
\definecolor{Green}{rgb}{0, 1.0, 0}
\setcounter{MaxMatrixCols}{20} 
\makeindex

\begin{document}
		
\title[Second Minimal Odd Periodic Orbits]{Classification of the Second Minimal Odd Periodic Orbits in the Sharkovskii Ordering}\thanks{Department of Mathematics, Florida Institute of Technology, Melbourne, FL 32901}

	\author[U.G.Abdulla]{Ugur G. Abdulla}
	\address{Department of Mathematics, Florida Institute of Technology, Melbourne, FL 32901}
	\email{abdulla@fit.edu}	
	
	\author[R.U.Abdulla]{Rashad U. Abdulla}	
		
	\author[M.U.Abdulla]{Muhammad U. Abdulla}		
				
	\author[N.H.Iqbal]{Naveed H. Iqbal}
	
	\begin{abstract}
	This paper presents full classification of second minimal odd periodic orbits of a continuous endomorphisms on the real line. A $(2k+1)$-periodic orbit $\{\beta_{1}<\beta_{2}<\cdots <\beta_{2k+1} \}$, ($k\geq 3$) is called second minimal  for the map $f$, if $2k-1$ is a minimal period of $f|_{[\beta_1,\beta_{2k+1}]}$ in the Sharkovski ordering. We prove that second minimal odd orbits either have a Stefan structure like minimal odd orbits, or have one of the $4k-3$ types, each characterized with unique cyclic permutation and directed graph of transitions  with accuracy up to inverses. 
\end{abstract}
	\maketitle

	\section{Introduction and Main Result}
\label{sec:introduction}
Let $f: I\rightarrow I$ be a continuous endomorphism, and $I$ be a non-degenerate interval on the real line. Let $f^n: I \rightarrow I$ be an $n$-th iteration of $f$. A point $c\in I$ is called a periodic point of $f$ with period $m$ if $f^m(c)=c$, $f^k(c)\neq c$ for $1\leq k <m$. The set of $m$ distinct points
\[ c, f(c), \cdots , f^{m-1}(c) \]
is called the orbit of $c$, or briefly $m$-orbit or periodic $m$-cycle. In his celebrated paper \cite{sharkovsky64}, Sharkovski discovered a law on the coexistence of periodic orbits of continuous endomorphisms on the real line.

\begin{thm}
	\label{thm:sharkovskii}
	\cite{sharkovsky64} Let the positive integers be totally ordered in the following way:
	
	\begin{equation}\label{sharkovskiordering}
		1\triangleleft 2\triangleleft 2^{2} \triangleleft 2^{3}\triangleleft\dots\triangleleft 2^{2}\cdot 5\triangleleft2^{2}\cdot 3\triangleleft\dots\triangleleft 2\cdot 5\triangleleft 2\cdot 3 \triangleleft\dots\triangleleft 9\triangleleft 7\triangleleft 5\triangleleft 3.
	\end{equation}

	\noindent If a continuous endomorphism, $f:I\rightarrow I$, has a cycle of period $n$ and $m\triangleleft n$, then $f$ also has a periodic orbit of period $m$.
\end{thm}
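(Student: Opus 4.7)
The plan is to prove Theorem~\ref{thm:sharkovskii} via the classical covering-relations approach. Given an $n$-cycle $\{\beta_1<\cdots<\beta_n\}$ of $f$, I would form the $n-1$ basic intervals $I_j=[\beta_j,\beta_{j+1}]$ and introduce the covering relation $I_j\to I_k$ meaning $f(I_j)\supseteq I_k$. The central technical lemma, proved by a standard nested-preimages/intermediate-value argument, asserts that every directed loop $I_{j_0}\to I_{j_1}\to\cdots\to I_{j_{m-1}}\to I_{j_0}$ of length $m$ in this Markov digraph yields a point $x$ with $f^i(x)\in I_{j_i}$ for $0\leq i<m$ and $f^m(x)=x$; for a primitive loop whose itinerary avoids the cycle points themselves, $x$ has exact period $m$.

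The heart of the proof, and the main obstacle, is the Stefan lemma: if $n=2k+1\geq 3$ is the minimal odd period admitted by $f$, then, after locating the unique fixed point of $f$ inside the convex hull of the cycle, the induced cyclic permutation of $\{\beta_1,\ldots,\beta_n\}$ is forced (up to reflection) to be the Stefan permutation, in which the orbit zig-zags across the fixed point in a rigidly prescribed manner. Consequently the Markov graph contains a Hamiltonian simple cycle on the $n-1$ intervals together with a self-loop on the central interval (the one containing the fixed point) and a specific set of auxiliary covering edges. Establishing this structural rigidity requires a careful case analysis: any deviation from the Stefan pattern would, via the itinerary lemma, produce a primitive loop whose length is smaller than $n$ and not a power of two, hence a cycle of smaller odd period, contradicting minimality.

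Once the Stefan configuration is secured, all periods $m\triangleleft n$ are realized by exhibiting explicit primitive loops. Inserting $r\geq 0$ self-loop iterations into the Hamiltonian cycle produces a primitive loop of length $n-1+r$, yielding every period $m\geq n-1$ that is either an odd number exceeding $n$ or an even number; the auxiliary covering edges intrinsic to the Stefan permutation then supply the short primitive loops responsible for the remaining small even predecessors and powers of two. Crucially, no primitive loop of odd length strictly between $1$ and $n$ exists in this graph, consistent with the non-implication of those smaller odd periods.

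Finally, for a general non-power-of-two period $n=2^s q$ with $q\geq 3$ odd, the orbit has period exactly $q$ under $f^{2^s}$, so the odd case applied to this iterate yields cycles of all required predecessor periods for $f^{2^s}$, which lift to cycles of the corresponding Sharkovskii-predecessor periods of $f$. The purely dyadic tail of the ordering is handled by the elementary fact that a period-$2^t$ cycle of $f^2$ lifts to a period-$2^t$ or period-$2^{t+1}$ cycle of $f$, providing descent down the powers of two to a fixed point, whose existence is guaranteed by the intermediate value theorem applied to $f(x)-x$ on any basic interval.
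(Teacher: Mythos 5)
This statement is Sharkovskii's theorem itself, which the paper does not prove: it is quoted from \cite{sharkovsky64} and used as a black box (only the Straffin lemma, its converse, and the Štefan structure theorem are restated as tools). So there is no in-paper proof to compare against; your proposal has to be judged on its own.

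Your outline is the classical Markov-graph proof (Štefan, Straffin, Block--Guckenheimer--Misiurewicz--Young), and as a skeleton it is sound: basic intervals, covering relations, the itinerary lemma for primitive loops (this is exactly Lemma~\ref{thm:straffin} of the paper), the rigidity of the Štefan permutation for a minimal odd period (Theorem~\ref{thm:stefan}), and then explicit primitive loops of every admissible length. Three points deserve more care than you give them. First, ``the unique fixed point of $f$ inside the convex hull of the cycle'' is neither true in general nor needed; what the argument uses is the existence of some $r_*$ with $f(\beta_{r_*})>\beta_{r_*}$ and $f(\beta_{r_*+1})<\beta_{r_*+1}$, hence a self-covering central interval. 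Second, the exact-period issue for points produced by primitive loops (the point could be an endpoint, i.e.\ one of the $\beta_i$, and then have smaller period) is real; you acknowledge it but the fix should be stated. Third, and most substantively, the reduction of $n=2^s q$ to the odd case is the step where your sketch hides genuine bookkeeping: a periodic orbit of $f^{2^s}$ of period $p$ lifts to an $f$-orbit of period $2^{j}p$ for \emph{some} $0\le j\le s$, not necessarily $j=s$, and for $j<s$ with $p$ odd $\ge 3$ the resulting period $2^{j}p$ is \emph{not} a Sharkovskii predecessor of $2^s q$. The standard repair is to realize the target period $2^s p$ as a suitable \emph{even} period of an intermediate iterate (or to induct on $s$ via the doubling lemma), rather than to lift an odd period directly. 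As written, ``which lift to cycles of the corresponding Sharkovskii-predecessor periods'' asserts the conclusion without the argument; this is the one place where your proposal, if expanded literally, would fail.
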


This result played a fundamental role in the development of the theory of discrete dynamical systems. 
Following the standard approach (\cite{Block1979,block92,alseda2000}), we characterize each periodic orbit with cyclic permutations and directed graphs of transitions or {\it digraphs}. Consider the $m$-orbit:

\[ {\bf B}=\{\beta_{1}<\beta_{2}<\cdots <\beta_{m} \} \]
\begin{defn}
	If $f(\beta_i)=\beta_{s_i}$ for $1\le s_i \le m$, with $i=1,2,...,m$, then ${\bf B}$ is associated with cyclic permutation
	\[\pi=
	\begin{pmatrix}
	1&2&\dots&m\\
	s_1&s_2&\dots&s_m
	\end{pmatrix}
	\]
\end{defn}

In the sequel $<a,b>$ means either $[a,b]$ or $[b,a]$.

\begin{defn}
	Let $J_i=[\beta_i, \beta_{i+1}]$. The digraph of $m$-orbit is a directed graph of transitions with vertices $J_1,J_2,\cdots,J_{m-1}$
	and oriented edges $J_i \rightarrow J_s$ if $J_s \subset \  <f(\beta_i), f(\beta_{i+1})>$.
\end{defn}

\begin{defn}
         The inverse digraph of $m$-orbit is obtained from the digraph of $m$-orbit by replacing each $J_i$ with $J_{m-i}$. 
\end{defn}
The inverse of the digraph associated with the cyclic permutation $\pi$ is a digraph associated with the cyclic permutation $\omega\circ \pi\circ \omega$, where $\omega$ be the order reversing permutation:
	
	\[
	\omega =\begin{pmatrix}
	1 & 2 & \dots & m-1 & m\\
	m & m-1 & \dots & 2 & 1
	\end{pmatrix}
	\]	
\begin{defn}
	A continuous function $P_{f}: \left [ \beta_{1}, \beta_{m} \right ]\rightarrow \left [ \beta_{1}, \beta_{m} \right ]$ is called the $P$-linearization of $f$ if $P_{f}\left ( \beta_{i} \right ) = f\left ( \beta_{i} \right )$ and $P$ is a linear function in each interval $J_{i}$
\end{defn}

\begin{defn}
	The arrangement of the minimums and maximums of the map $P_{f}$ in the open interval $\left ( \beta_{1}, \beta_{m} \right )$ will be called the topological structure of the periodic orbit.
\end{defn}

The proof of Sharkovski's theorem significantly uses the concept of {\it minimal orbit}.

\begin{defn}
	$m$-orbit of $f$ is called minimal if $m$ is the minimal period of $f|_{[\beta_1,\beta_m]}$ in the Sharkovski ordering.
\end{defn}

\definecolor{Red}{rgb}{1.0, 0, 0}
\begin{defn}
	Digraph of the $m$-orbit contains the red edge 
	$J_i{\color{Red} \rightarrow}J_s$ if $J_s=<f(\beta_i), f(\beta_{i+1})>$. 
\end{defn}
The structure of the minimal orbits is well understood \cite{Stefan1977, alseda1984, Block1986, block92, alseda2000,abdulla2013}. Minimal odd orbits are called Stefan orbits, due to the following characterization:
\begin{thm}
	\label{thm:stefan}
		\cite{Stefan1977} The digraph of a minimal $2k+1$-orbit, $k\geq 1$, has the unique structure given in Fig.~\ref{fig:minOddDigraph} and cyclic permutation \eqref{eq:stefanorbit} up to an inverse. 
		
		\begin{equation}
			\begin{pmatrix}
			1 & 2 & 3 & \cdots & k & k+1 & k+2 & k+3 & \cdots & 2k & 2k+1 \\ 
			k+1 & 2k+1 & 2k & \cdots & k+3 & k+2 & k & k-1 & \cdots & 2 & 1
			\end{pmatrix}			
		\label{eq:stefanorbit}
		\end{equation}
		
		\begin{figure}[htpb]
			\centering						
			\resizebox{8cm}{!}{\begin{tikzpicture}[
            > = stealth, 
            shorten > = 1pt, 
            auto,
            node distance = 1.5cm, 
            semithick 
        ]
        
				\node[draw=none,fill=none] (jk2) {$J_{k+2}$};
        \node[draw=none,fill=none] (jk) [below of=jk2] {$J_{k}$};
				\node[draw=none,fill=none] (jk1) [left=1cm of {$(jk2)!0.5!(jk)$}] {$J_{k+1}$};				
        \node[draw=none,fill=none] (jk3) [right of=jk2] {$J_{k+3}$};
        \node[draw=none,fill=none] (jkm1) [below of=jk3] {$J_{k-1}$};
        \node[draw=none,fill=none] (dots1) [right of=jk3] {$\dots$};
        \node[draw=none,fill=none] (dots2) [right of=jkm1] {$\dots$};
        \node[draw=none,fill=none] (j2km1) [right of=dots1] {$J_{4}$};
        \node[draw=none,fill=none] (j3) [right of=dots2] {$J_{3}$};
        \node[draw=none,fill=none] (j2k) [right of=j2km1] {$J_{2k}$};
				\node[draw=none,fill=none] (j2) [below of=j2k] {$J_{2}$};
        \node[draw=none,fill=none] (j1) [right=1cm of {$(j2k)!0.5!(j2)$}] {$J_{1}$};        

				\path[->] (jk1) edge[loop left] node{} (jk1);
        \path[->] (jk1) edge node{} (jk);
        \path[->] (jk) edge[red] node{} (jk2);
        \path[->] (jk2) edge[red] node{} (jkm1);
        \path[->] (jkm1) edge[red] node{} (jk3);
				\path[dashed,->] (jk3) edge[red] node{} (dots2);
        \path[dashed,->] (dots2) edge[red] node{} (dots1);
        \path[dashed,->] (dots1) edge[red] node{}  (j3);
        \path[->] (j3) edge[red] node{}  (j2km1);
        \path[->] (j2km1) edge[red] node{}  (j2);
        \path[->] (j2) edge[red] node{}  (j2k);
				\path[->] (j2k) edge[red] node{}  (j1);
				\draw [<-] (-1.6,-0.2) -- (-1.6, 0.2);
				\draw (-1.6,0.2) arc (180:90:8mm) (-0.8,1) -- (6.5,1) (6.5,1) arc (90:0:8mm) (7.3,0.2) -- (7.3, -0.2) -- cycle;
				\draw [<-] (0,0.4) -- (0,1);
				\draw [<-] (1.5,0.4) -- (1.5,1);
				\draw [dashed, <-] (3,0.4) -- (3,1);
				\draw [<-] (4.5,0.4) -- (4.5,1);
				\draw [<-] (6,0.4) -- (6,1);
			\end{tikzpicture}} 		
		\caption{Digraph of Minimal Odd Orbit}
		\label{fig:minOddDigraph}
	\end{figure}
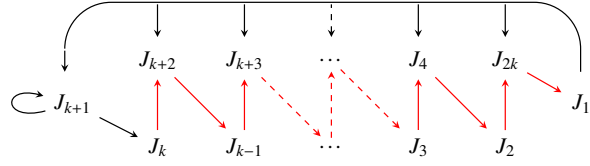	
\end{thm}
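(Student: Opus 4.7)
The plan is to exploit minimality through the standard itinerary (admissibility) principle for digraphs: any loop $J_{i_0}\to J_{i_1}\to\cdots\to J_{i_{m-1}}\to J_{i_0}$ of length $m$ forces $f$ to have a periodic point of period dividing $m$, and under non-degeneracy the period equals $m$. Minimality of the $(2k+1)$-orbit therefore forbids every loop whose length precedes $2k+1$ in the Sharkovskii order, and this rigidity will be enough to force the Stefan pattern.

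First I would locate a central self-covering interval. Since $f(\beta_1)>\beta_1$ and $f(\beta_{2k+1})<\beta_{2k+1}$, the sequence $f(\beta_i)-\beta_i$ changes sign at some index $j$, whence $f(\beta_j)\geq \beta_{j+1}$ and $f(\beta_{j+1})\leq \beta_j$. Therefore $J_j\subset <f(\beta_j),f(\beta_{j+1})>$, producing a loop $J_j\to J_j$ and a fixed point $\alpha\in J_j$. Replacing $f$ by its order-reversing conjugate (i.e., passing to the inverse digraph) if needed, one may assume $j\geq k+1$; a counting argument then rules out $j\geq k+2$, since placing more than $k+1$ orbit points on one side of $\alpha$ compels the orbit to backtrack and creates, via the itinerary principle, a subcycle of odd length strictly after $2k+1$ in \eqref{sharkovskiordering}, contradicting minimality. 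Hence $j=k+1$.

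Next I would determine the images $f(\beta_i)$ inductively, starting from $\alpha$ and working outward. The assignment $f(\beta_{k+1})=\beta_{k+2}$ is forced, because any $f(\beta_{k+1})=\beta_\ell$ with $\ell>k+2$ generates a digraph subcycle that shortcuts past intermediate vertices, yielding a period strictly earlier in the Sharkovskii ordering. The same \emph{nearest available neighbour} argument, alternating across $\alpha$ and peeling off one level at each step, then gives $f(\beta_{k+2})=\beta_k$, $f(\beta_k)=\beta_{k+3}$, $f(\beta_{k+3})=\beta_{k-1}$, and so on, until $f(\beta_{2k+1})=\beta_1$ closes the orbit. This is exactly the cyclic permutation \eqref{eq:stefanorbit}, and a direct evaluation of $<f(\beta_i),f(\beta_{i+1})>$ for each $i$ reproduces Figure~\ref{fig:minOddDigraph} together with its red subgraph.

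The main difficulty is this inductive step: each \emph{nearest neighbour} claim must be justified by exhibiting an explicit subcycle of smaller Sharkovskii-period, without inadvertently extracting only a concatenation of the trivial loop $J_{k+1}\to J_{k+1}$ (which is permitted, as it yields only the fixed point $\alpha$). The cleanest bookkeeping is induction on the distance from $\alpha$: once the first $2r$ transitions in the conjectured spiral are fixed, the $(2r+1)$-st image is forced because the only alternative produces a loop of odd length $2r+1$ lying strictly after $2k+1$ in \eqref{sharkovskiordering}. The \emph{up to inverse} indeterminacy in the statement corresponds exactly to the binary choice $f(\beta_{k+1})=\beta_{k+2}$ versus $f(\beta_{k+1})=\beta_k$ at the first step.
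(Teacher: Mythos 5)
Your overall strategy---a self-covering central interval plus the prohibition, via Straffin's lemma, of primitive loops of odd length strictly between $1$ and $2k+1$---is indeed the standard route to Stefan's theorem, and it is the same machinery this paper assembles in Section~3 (the shortest loop \eqref{eq:secondminfc_orig} together with the rules \eqref{eq:2minrulesForward}--\eqref{eq:2minrulesBackward}, specialized to the case $m=2k$); note the paper itself does not reprove the theorem but cites \cite{Stefan1977} and Proposition~8 of \cite{block92}. The genuine gap in your write-up is that the global step is missing, and every one of your ``forced'' local deductions secretly depends on it. To forbid, say, $f(\beta_{k+1})=\beta_{k+4}$ you must exhibit a primitive loop of odd length $\ell$ with $1<\ell<2k+1$; the new edges $J_{k+1}\rightarrow J_{k+2},J_{k+3}$ do not close up into any loop using only the data available at that stage of your outward induction---you need a return path to $J_{k+1}$ of controlled length and parity, and chasing where such return edges can live cascades through the whole orbit. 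The clean way to organize that chase is the lemma you never state or prove: the shortest loop $J_{r_*}\rightarrow K_1\rightarrow\dots\rightarrow K_{m-1}\rightarrow J_{r_*}$ with not all $K_i=J_{r_*}$ has length at least $2k$ (if $m\le 2k-1$ then either $m$, or $m+1$ after prepending the trivial loop $J_{r_*}\rightarrow J_{r_*}$, is an odd length in $(1,2k+1)$ carried by a primitive loop, contradicting minimality), hence exactly $2k$, hence visits every interval exactly once. Only after that do the no-shortcut and no-even-backtrack rules become available, and they---not a pointwise ``nearest neighbour'' principle---are what force the alternating arrangement of Figures~\ref{fig:secminint1}--\ref{fig:secminint2} and then the permutation \eqref{eq:stefanorbit}.

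The same objection applies to your preliminary claim that $j\ge k+2$ is excluded by ``a counting argument'': the fact that the self-covering interval is one of the two middle ones is a \emph{consequence} of the length-$2k$ loop alternating sides of $J_{r_*}$ (which places $k$ intervals on one side of it and $k-1$ on the other), not something derivable beforehand from cardinalities of $B^{\pm}$; without minimality the self-loop can sit anywhere, and with minimality its location is only visible through the loop structure. A smaller inaccuracy: once $j=k+1$ is fixed, $f(\beta_{k+1})=\beta_k$ is not a live alternative, since $f(\beta_{k+1})>\beta_{k+1}$ by the definition of $r_*$; the ``up to inverse'' ambiguity is resolved earlier, in the choice between $|B^-|>|B^+|$ and its reverse. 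So the proposal is right in spirit but, to become a proof, must be reorganized around the shortest-loop lemma rather than around an outward pointwise induction.
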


The main goal of this paper is the characterization of second minimal odd orbits.
\begin{defn} 
	\label{thm:secondmin}
	A $(2k+1)$-orbit, $k\geq 3$ is called second minimal if $2k-1$ is the minimal period of $f|_{[\beta_1,\beta_{2k+1}]}$ in the Sharkovski ordering. 
\end{defn}
To achieve the full characterization of the second minimal odd orbits, in the next definition we introduce a new notion of {\it simplicity} of odd periodic orbits.
\noindent Let $B(i,j)$, $1\leq i\leq j\leq 2k+1$ be subsets of a $(2k+1)$-orbit defined as

\begin{equation}
	B(i,j) = \left \{ \beta_{k}\in B: i\leq k\leq j \right \}
\label{eq:defnBij}
\end{equation}

\begin{defn}\label{simpleoddorbits}
	A $(2k+1)$-orbit is called simple if either
	\begin{enumerate}
		\item $B(k+2, 2k+1)$ is mapped to $B(1, k+1)$; and $B(1, k+1)$ is mapped to $B(k+2, 2k+1)$ except one point; or \label{en:simpletypea}
		\item $B(1, k)$ is mapped to $B(k+1, 2k+1)$; and $B(k+1, 2k+1)$ is mapped to $B(1, k)$ except one point. \label{en:simpletypeb}
	\end{enumerate}
	
	We say a simple $(2k+1)$-orbit is of type $+$ (resp. type $-$) if (\ref{en:simpletypea}) (resp. (\ref{en:simpletypeb})) is satisfied.
\end{defn}

\noindent First of all note that the Stefan orbits or minimal odd orbits are simple according to Definition~\ref{simpleoddorbits}. Our first main result reads: 
\begin{thm}
	Second minimal $(2k+1)$-orbits, $k\geq 3$, are simple. 
\label{2ndminimalissimple}
\end{thm}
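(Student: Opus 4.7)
The plan is a proof by contradiction. Assume that $\mathbf{B}$ is not simple; I will build from this assumption a closed elementary walk in the digraph of $\mathbf{B}$ whose length is an odd integer in $\{3,5,\ldots,2k-3\}$, and then invoke the standard interval-covering lemma to obtain an actual periodic orbit of $f$ of exactly that odd period inside $[\beta_1,\beta_{2k+1}]$, contradicting the second-minimality hypothesis of Definition~\ref{thm:secondmin}.

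The first step is to translate simplicity into a combinatorial statement about the cyclic permutation $\pi$. With the ``type~$+$ center'' between $\beta_{k+1}$ and $\beta_{k+2}$, label each orbit point by $L$ if it lies in $B(1,k+1)$ and by $R$ if it lies in $B(k+2,2k+1)$, and read the labels in the order of $f$-iteration along the orbit to obtain a cyclic word of length $2k+1$. A standard cyclic count gives $\#LL=\#RR+1$, so always $\#LL\geq 1$; type~$+$ simplicity amounts to $\#LL=1$, $\#RR=0$. The analogous identity $\#RR=\#LL+1$ holds with the ``type~$-$ center'' between $\beta_k$ and $\beta_{k+1}$, and type~$-$ simplicity means $\#RR=1$, $\#LL=0$. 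Non-simplicity of $\mathbf{B}$ is therefore equivalent to having at least two same-side consecutive transitions in the cyclic side word with respect to \emph{both} center choices.

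The main step is to convert these surplus same-side transitions into a short odd cycle in the digraph of $\mathbf{B}$. Every same-side transition $\beta_i\mapsto\beta_j$ with $\beta_i$ and $\beta_j$ on the same half of the orbit forces a red edge $J_s\to J_t$ between intervals $J_s$, $J_t$ lying on that same half. Splicing such an anomalous edge into the long arc of red edges inherited from $\pi$ shortcuts that arc and produces a closed elementary walk. The identity $\#LL-\#RR=\pm 1$, combined with the fact that the halves contain $k$ and $k+1$ points, pins down parity and shows that the resulting walk has odd length strictly between $1$ and $2k-1$. Using red edges ensures the walk is elementary and not a multiple of a fixed-point loop, so the covering lemma produces a genuine $\ell$-periodic orbit of $f$ with $\ell\in\{3,5,\ldots,2k-3\}$, contradicting second minimality.

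The hardest part will be the case analysis that pins down the length $\ell$: one must separately handle the positions of the surplus same-side pairs, which may be adjacent or spread apart and distributed in various ways between the two halves, and check in each sub-case that the constructed shortcut is elementary and of odd length in the forbidden range. I expect this to reduce to one or two combinatorial lemmas on near-alternating cyclic $L/R$-words of unbalanced weight; once these are in place, the forcing arguments close the proof.
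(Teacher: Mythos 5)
Your opening reduction is fine: the cyclic $L/R$ encoding, the identity $\#LL-\#RR=1$ for the type~$+$ center (and its mirror for type~$-$), and the restatement of non-simplicity as a surplus of same-side transitions with respect to both centers are all correct and would be a reasonable way to organize a direct proof. The problem is that the ``main step'' --- converting that surplus into a primitive odd cycle of length at most $2k-3$ --- is exactly the content of the theorem, and the mechanism you propose for it does not work as stated. A single same-side transition $\beta_i\mapsto\beta_j$ does \emph{not} force a red edge between two intervals lying in the same half: a red edge $J_s\rightarrow J_t$ requires the images of \emph{both} endpoints of $J_s$ to be the two endpoints of $J_t$, so it is a condition on a pair of adjacent orbit points, not on one point, and in general a same-side image of one endpoint coexists with an opposite-side image of the other, producing only ordinary (non-red) covering edges that span the center. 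Likewise, the ``long arc of red edges inherited from $\pi$'' into which you want to splice the anomalous edge is not available a priori: that Stefan-like backbone is precisely the structure the paper has to \emph{derive}, and it only does so after restricting attention to the shortest closed path through the fixed-point interval $J_{r_*}$, bounding its length below by $2k-2$ via Straffin, and extracting the forbidden-transition rules \eqref{eq:2minrulesForward}--\eqref{eq:2minrulesBackward} that force the alternating arrangement of Figures~\ref{fig:secminint1}--\ref{fig:secminint2}. Without that backbone your parity count has nothing to measure the shortcut against, and the claim that the spliced walk is elementary and of odd length in $\{3,\dots,2k-3\}$ is unsupported.

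For comparison, the paper never proves simplicity head-on. It proves Theorems~\ref{2ndminimalissimple} and \ref{thm:secondminodd} together by exhaustion: the shortest loop through $J_{r_*}$ has length $2k$, $2k-1$, or $2k-2$; in the first case the orbit is Stefan, and in the remaining cases Lemma~\ref{lem:JhatJtilde} pins down the admissible positions of the one or two missing intervals, after which each setting yields finitely many candidate cyclic permutations that are individually accepted or killed by Straffin's lemma. Simplicity is then read off from the explicit list of survivors. If you want a genuinely different, non-enumerative proof along your lines, the missing ingredient is a lemma of the form ``a $(2k+1)$-orbit whose side word has $\#RR\geq 1$ for the $+$ center and $\#LL\geq 1$ for the $-$ center has a digraph containing a primitive cycle of odd length between $3$ and $2k-3$,'' and you would need to prove it from the covering structure of arbitrary (not yet classified) orbits; as written, your proposal asserts this lemma rather than proving it.
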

To pursue a full classification of the second minimal odd orbits, first note that second minimal odd orbits may have a Stefan structure identified in Theorem~\ref{thm:stefan}. Indeed, consider a map which is $P$-linearization of the minimal $2k+1$-orbit. It has a unique fixed point which is interior point of one of the two middle intervals. We can replace linear function in the small neighborhood $F$ of the fixed point with $P$-linearization of the minimal $2k-1$-orbit, join this function continuously with the original map outside of the small neighborhood of size twice larger than $F$. Moreover, we can choose the size of $F$ so small that the digraph of the $2k+1$-orbit is not changed and still has a Stefan structure. Obviously, $2k+1$-orbit is second minimal with respect to the new map, although its Stefan structure is unchanged. Therefore, to complete the full classification it remains to clarify the structure of all second minimal odd orbits with non-Stefan structure. Our main classification result reads:
\begin{thm}	
	Simple positive type second minimal $2k+1$-orbits are either Stefan orbits, or have one of the $4k-3$ types, each with unique digraph and cyclic permutation. Their inverses represent all second minimal $(2k+1)$-orbits of simple negative type. The topological structure of all $4k-3$ simple positive types of second minimal $(2k+1)$-orbits with non-Stefan structure is presented in Table~\ref{tab:topstructintro}. The topological structure of their inverses is obtained by replacing ``max'' and ``min'' with each other respectively. The $P$-linearization of each of the $4k-3$ types (and their inverses) presents an example of a continuous map with a second minimal $(2k+1)$-orbit.
	
	\begin{table}%
	\centering
	\begin{tabular}{r|c}\toprule
		Topological Structure & Count  \\ \midrule
		max & $1$ \\
		min-max & $1$ \\
		min-max-min & $1$ \\
		max-min & $2$ \\
		max-min-max & $2k-3$ \\
		max-min-max-min-max & $2k-5$\\ \bottomrule
	\end{tabular}
	\caption{Topological structure of all $4k-3$ second minimal $(2k+1)$-orbits of simple positive type with non-Stefan structure}
	\label{tab:topstructintro}
	\end{table}
	
	\label{thm:secondminodd}				
\end{thm}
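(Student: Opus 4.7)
The plan is to combine the simplicity result Theorem~\ref{2ndminimalissimple} with a combinatorial enumeration governed by the non-existence of $(2k-3)$-orbits for $f|_{[\beta_1,\beta_{2k+1}]}$. I may assume the orbit is simple of positive type, so writing $A = B(1, k+1)$ and $C = B(k+2, 2k+1)$, we have $f(C) \subset A$ bijectively onto a $k$-element subset of $A$, and a single \emph{exceptional} point $p \in A$ with $f(p) \in A$; the remaining $k$ points of $A$ map bijectively onto $C$. The cyclic permutation is therefore determined by the triple consisting of $p$, $f(p)$, and the monotonicity patterns of the two bijections $A \setminus \{p\} \to C$ and $C \to A \setminus \{f(p)\}$, so the classification reduces to enumerating the admissible triples.

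Next, I would extract a Stefan $(2k-1)$-subcycle of $P_f$, whose existence is forced by Sharkovski's theorem together with the second-minimality hypothesis, and apply Theorem~\ref{thm:stefan} to pin down its digraph uniquely. This subcycle lives in the central block of the $(2k+1)$-orbit, near $\beta_{k+1}$ and $\beta_{k+2}$ where the fixed point of $P_f$ lies, and its red edges prescribe a nested Stefan pattern that every admissible $(2k+1)$-cyclic permutation must contain. I would then case-split on the position of $p$ and on how the Stefan block is embedded into the $(2k+1)$-digraph: $p = \beta_1$ recovers precisely the Stefan $(2k+1)$-orbit (topological structure max); each admissible interior position of $p$, together with the required embedding, yields exactly one cyclic permutation and one digraph, by monotonicity of the two bijections in Step~1. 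Reading the topological structure off from the sign pattern of $f(\beta_{i+1}) - f(\beta_i)$ and partitioning the admissible triples accordingly produces the six rows of Table~\ref{tab:topstructintro} with multiplicities $1, 1, 1, 2, 2k-3, 2k-5$ summing to $4k-3$; the linear growth of the last two counts reflects $p$ ranging over an interior subrange of indices.

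The main obstacle is verifying that each surviving candidate is genuinely second minimal, that is, that $P_f$ admits no period-$(2k-3)$ orbit on $[\beta_1,\beta_{2k+1}]$. By the loop characterization of periodic orbits through the digraph, this reduces to showing that no closed walk of length $2k-3$ in the candidate digraph forces a new period-$(2k-3)$ orbit. I expect to exploit the rigidity of the embedded Stefan $(2k-1)$-block: any such walk must thread this block, and its red-edge structure prevents closure at length $2k-3$. The delicate point is that the number of length-$(2k-3)$ walks grows with $k$, so the argument must be uniform — most likely by showing that every such walk either repeats a shorter subloop or traverses the nested Stefan block in a way that lengthens it to $\geq 2k-1$. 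Once this verification is complete, the $P$-linearization of each of the $4k-3$ surviving candidates exhibits the orbit as a genuine second minimal $(2k+1)$-orbit, the negative-type classification follows immediately from the inverse digraph construction, and the theorem is established.
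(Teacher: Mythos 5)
Your high-level skeleton (enumerate candidate permutations, kill those containing short suborbits, verify second minimality by excluding short primitive cycles, then $P$-linearize) does match the paper's strategy, but the core enumeration mechanism you propose does not work, and this is a genuine gap rather than a different route. You claim the cyclic permutation is determined by the exceptional point $p$, its image $f(p)$, and the monotonicity patterns of the two bijections $A\setminus\{p\}\to C$ and $C\to A\setminus\{f(p)\}$, and that ``each admissible interior position of $p$ yields exactly one cyclic permutation.'' This parametrization does not separate the $4k-3$ types: inspecting the paper's list, almost all of them have $p=\beta_1$ with $f(p)\in\{\beta_k,\beta_{k+1}\}$ (e.g.\ \eqref{eq:len2kcase11cyc}, \eqref{eq:len2kcase22cyc}, \eqref{eq:case2-2km1-2}, \eqref{eq:case2-2km1-3}, all of \eqref{eq:validij1}--\eqref{eq:validij4}, \eqref{eq:validij11}--\eqref{eq:validij14}, and \eqref{eq:validkk1}--\eqref{eq:validkk4}), so the position of $p$ carries almost no information. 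The data that actually distinguishes the types is finer: the paper takes the shortest loop \eqref{eq:secondminfc_index} through $J_{r_*}$ (of length $2k$, $2k-1$, or $2k-2$), uses the forbidden-transition rules \eqref{eq:2minrulesForward}--\eqref{eq:2minrulesBackward} to rigidify most of the permutation, and then classifies by where the one or two intervals missing from that loop are inserted (Lemma~\ref{lem:JhatJtilde}) together with the residual choices of images of a handful of boundary elements; each admissible insertion setting produces up to four distinct permutations (Lemma~\ref{lem:settingij}), and the final count $4k-3$ emerges only after the nontrivial deduplication of permutations shared between adjacent settings (Lemmas~\ref{lem:sharingHoriz}, \ref{lem:sharingVert}). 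Your proposal contains no mechanism that would reproduce either the per-setting multiplicity or the sharing, and the assertion that the multiplicities $1,1,1,2,2k-3,2k-5$ fall out of ``$p$ ranging over an interior subrange'' is unsupported.

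A second problem is your appeal to an ``embedded Stefan $(2k-1)$-subcycle'' whose red edges ``every admissible $(2k+1)$-cyclic permutation must contain.'' The $(2k-1)$-periodic orbit of $P_f$ guaranteed by second minimality consists of points that are in general not among the $\beta_i$, so its Stefan digraph does not sit inside, and does not directly constrain, the digraph of the $(2k+1)$-orbit. The usable constraint runs in the opposite direction: the \emph{non-existence} of a $(2k-3)$-orbit, converted via Lemma~\ref{thm:straffin} into the prohibition of short primitive cycles, which is exactly what yields \eqref{eq:2minrulesForward}--\eqref{eq:2minrulesBackward} and drives the whole case analysis. Your verification step is described correctly in spirit, but since it is framed around threading the nonexistent embedded Stefan block, it too would need to be rebuilt on the forbidden-cycle rules.
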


Theorems~\ref{2ndminimalissimple} \& \ref{thm:secondminodd} in the particular case $k=3$ was proved in \cite{abdulla2016}. Proof of Theorems~\ref{2ndminimalissimple} \& \ref{thm:secondminodd} is constructive, and provides explicit description of all types of second minimal odd orbits in terms of cyclic permutations and digraphs.
It should be pointed out that our main results can be formulated in the framework of formalized combinatorial dynamics, where without any reference to orbits, and associated maps, the objects are permutations (or patterns), and the main problem is to identify forcing relation between various patterns (see \cite{block92, alseda2000}). 

The structure of the remainder of the paper is as follows: In Section 2, we recall some preliminary facts. Theorems~\ref{2ndminimalissimple} \& \ref{thm:secondminodd} are proved in Section 3.       
	\section{Preliminary Results}
\label{sec:prelim}

\begin{lem}\label{preliminarylemma}
	The digraph of an $m$-orbit, ${\bf B}=\left\{\beta_{1}<\beta_{2}<\cdots <\beta_{m} \right\}$, $m>2$, possesses the following properties \cite{block92}:
	\begin{enumerate}
		\item The digraph contains a loop: $\exists r_{\ast}$ such that $J_{r_{\ast}}\rightarrow J_{r_{\ast}}$.
		\item $\forall r$, $\exists {r}'$ and ${r}''$ such that $J_{{r}'}\rightarrow J_{r} \rightarrow J_{{r}''}$; moreover, it is always possible to choose ${r}'\neq r$ unless $m$ is even and $r=m/2$, and it is always possible to choose ${r}''\neq r$ unless $m=2$.
		\item If $\left [ {\beta}', {\beta}''\right ]\neq\left [ \beta_{1}, \beta_{m}\right ]$, ${\beta}', {\beta}''\in {\bf B}$, then $\exists J_{{r}'}\subset\left [ {\beta}', {\beta}''\right ]$ and $\exists J_{{r}'}\nsubseteq\left [ {\beta}', {\beta}''\right ]$ such that $J_{{r}'}\rightarrow J_{{r}''}$.
		\item The digraph of a cycle with period $m>2$ contains a subgraph $J_{r_{\ast}}\rightarrow\cdots J_{r}$ for any $1\leq r\leq m-1$.
	\end{enumerate}
\end{lem}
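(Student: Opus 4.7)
The plan is to verify the four properties directly from the combinatorics of the cyclic permutation $\pi$ (a single $m$-cycle) together with elementary observations about the image intervals $<f(\beta_i), f(\beta_{i+1})>$.

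For (1), I track the sign function $\varphi(i) := s_i - i$. Since $\beta_1 = \min {\bf B}$ and $\beta_m = \max {\bf B}$, $\varphi(1) \geq 1$ and $\varphi(m) \leq -1$; hence some $i_\ast$ satisfies $\varphi(i_\ast) > 0 \geq \varphi(i_\ast+1)$, forcing $f(\beta_{i_\ast}) \geq \beta_{i_\ast+1}$ and $f(\beta_{i_\ast+1}) \leq \beta_{i_\ast}$, so $<f(\beta_{i_\ast}), f(\beta_{i_\ast+1})> \supseteq J_{i_\ast}$ and $J_{i_\ast}$ is a loop.

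For (2), $s_i \neq s_{i+1}$ makes the image nondegenerate and hence it contains at least one $J_s$, giving a successor; if the only successor of $J_r$ were itself, then $\{f(\beta_r), f(\beta_{r+1})\} = \{\beta_r, \beta_{r+1}\}$ would be a proper invariant subset, forcing $m = 2$. For the predecessor, label each $i$ as \emph{low} when $f(\beta_i) \leq \beta_r$ and \emph{high} when $f(\beta_i) \geq \beta_{r+1}$; there are $r$ lows and $m - r$ highs, and each adjacent opposite-label pair is a predecessor of $J_r$. Having a single such transition forces the lows to be a contiguous block at one end of $\{1, \dots, m\}$: the initial-block option makes $\{\beta_1, \dots, \beta_r\}$ invariant (impossible), and the terminal-block option places the sole predecessor at $r' = m - r$, which equals $r$ only when $m$ is even and $r = m/2$. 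Property (3) follows similarly: were $[\beta_a, \beta_b] \subsetneq [\beta_1, \beta_m]$ to have no outgoing edge, then $<f(\beta_i), f(\beta_{i+1})> \subseteq [\beta_a, \beta_b]$ for every $a \leq i < b$, and $f$ would permute the proper subset $\{\beta_a, \dots, \beta_b\}$ of ${\bf B}$, contradicting the single $m$-cycle.

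For (4), let $R$ be the set of indices reachable from the loop vertex $r_\ast$; $R$ is closed under the edge relation. Setting $B_R := \bigcup_{r \in R}\{\beta_r, \beta_{r+1}\}$, closure of $R$ combined with the fact that each $<f(\beta_r), f(\beta_{r+1})>$ is precisely the union of the $J_s$'s it contains gives $f(B_R) \subseteq B_R$, hence $B_R = {\bf B}$. This forces $1, m - 1 \in R$ and confines every gap $r_0 \in R^c$ to $\{2, \dots, m-2\}$ with $r_0 \pm 1 \in R$. Taking the smallest gap $r_0$, closure forbids $J_r \to J_{r_0}$ for each $r \in \{1, \dots, r_0 - 1\}$; propagating the resulting pairwise constraint on $(s_r, s_{r+1})$ backward shows that $s_1, \dots, s_{r_0}$ lie on a common side of $r_0$. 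The ``$\leq r_0$'' branch produces a proper invariant subset. In the ``$\geq r_0 + 1$'' branch, combining with the symmetric analysis from the largest gap and the bijectivity of $\pi$ restricts $\pi$ to a highly specific exchange-type structure; direct inspection of the resulting digraph then shows that every loop of it sits at a supposed gap position, forcing $r_\ast \in R^c$ and contradicting $r_\ast \in R$.

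The main obstacle is (4): reducing to $B_R = {\bf B}$ is routine, but excluding isolated gaps requires combining (3) with a combinatorial case analysis of both where $\pi$ sends its entries and where the digraph can host a loop. Properties (1)--(3) are by contrast short sign-counting and invariance arguments, with the predecessor count in (2) the most delicate.
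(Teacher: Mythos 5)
The paper does not actually prove this lemma --- it is quoted from \cite{block92} --- so the only comparison available is with the standard proof there. Your arguments for (1)--(3) are correct and complete: the sign-change argument for the loop, the low/high labelling and transition count for the predecessor in (2) (with the two contiguous-block cases correctly isolating the exception $m$ even, $r=m/2$, the initial-block case dying on an invariant subset), and the invariant-subset argument for (3) are all sound.

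The one genuine gap is in part (4), in the ``$\geq r_0+1$'' branch. Everything up to ``the gaps of $R$ in $\{2,\dots,m-2\}$ are isolated'' is fine, but eliminating the gaps by invoking only the smallest gap $r_0$ and the largest gap $r_1$ is not enough once $R$ has two or more gaps: those two constraints control only the first block $\{1,\dots,r_0\}$ and the last block $\{r_1+1,\dots,m\}$ of indices, say nothing about where $\pi$ sends the intermediate blocks, and therefore do not yet force any ``exchange-type structure''; a loop could a priori sit in the interior of a middle block. The repair is to apply the same observation to every pair consisting of a maximal block of $R$ and a gap: since no $J_r$ with $r\in R$ has an edge into a gap interval, for each maximal block $\{a,\dots,b\}$ of $R$ the images $s_a,\dots,s_{b+1}$ avoid every cut between $g$ and $g+1$, so $\pi$ maps each super-interval $[1,g_1],[g_1+1,g_2],\dots,[g_k+1,m]$ into a single super-interval; bijectivity and the single-cycle property then force $\pi$ to permute these blocks in one fixed-point-free $(k+1)$-cycle, after which a loop $J_r\rightarrow J_r$ is possible only when $r$ and $r+1$ lie in different blocks, i.e.\ only at gap positions --- contradicting $r_\ast\in R$. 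With that insertion your proof closes. For what it is worth, the proof in \cite{block92} avoids the gap analysis entirely: the set of vertices reachable from the loop vertex in at most $n$ steps is always a contiguous block containing $r_\ast$ (images of contiguous unions are intervals, all containing $J_{r_\ast}$), and the stabilized union of its intervals is invariant, hence all of $[\beta_1,\beta_m]$; your route trades that contiguity observation for the $B_R={\bf B}$ invariance plus the block combinatorics.
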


\begin{defn} 
	A cycle in a digraph is said to be primitive if it does not consist entirely of a cycle of smaller length described several times.
\end{defn}

\begin{lem}[Straffin]
	\label{thm:straffin}
	\cite{Straffin1978, block92} If $f$ has a periodic point of period $n>1$ and its associated digraph contains a primitive cycle $J_{0}\rightarrow J_{1}\rightarrow\dots\rightarrow J_{m-1}\rightarrow J_{0}$ of length $m$, then f has a periodic point $y$ of period $m$ such that $f^{k}(y) \in J_{k}, (0\leq k < m)$.
\end{lem}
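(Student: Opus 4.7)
The plan is to establish the lemma in two stages: first construct a point $y$ with the prescribed itinerary $f^{k}(y)\in J_{k}$ satisfying $f^{m}(y)=y$, and then argue that primitivity of the cycle forces the period of $y$ to be exactly $m$.

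For the first stage I would use the covering property implicit in the definition of the digraph: $J_{i}\rightarrow J_{s}$ means $J_{s}\subset\langle f(\beta_{i}),f(\beta_{i+1})\rangle$, and since $f$ is continuous on $J_{i}=[\beta_{i},\beta_{i+1}]$, the intermediate value theorem gives $f(J_{i})\supseteq J_{s}$. Applying this to the cycle $J_{0}\to J_{1}\to\cdots\to J_{m-1}\to J_{0}$, I build inductively a nested chain of closed subintervals $K_{0}=J_{0}\supseteq K_{1}\supseteq\cdots\supseteq K_{m-1}$ with $f^{i}(K_{i})=J_{i}$: given $K_{i}$ with $f^{i}(K_{i})=J_{i}$, since $f(J_{i})\supseteq J_{i+1}$, there is a closed subinterval of $K_{i}$ mapped by $f^{i+1}$ onto $J_{i+1}$, which I take as $K_{i+1}$. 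At the closing step $f^{m}(K_{m-1})\supseteq f(J_{m-1})\supseteq J_{0}\supseteq K_{m-1}$, so I can extract a closed subinterval $L\subseteq K_{m-1}$ with $f^{m}(L)=K_{m-1}\supseteq L$; then applying the intermediate value theorem to $g(x)=f^{m}(x)-x$ on $L$ yields a fixed point $y\in L$ of $f^{m}$. By construction $f^{k}(y)\in f^{k}(L)\subseteq J_{k}$ for every $0\le k<m$.

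For the second stage I suppose for contradiction that the period $d$ of $y$ satisfies $d<m$. Since $f^{m}(y)=y$, one has $d\mid m$, and for every $k$, $f^{k}(y)=f^{k+d}(y)\in J_{k}\cap J_{k+d}$ (indices mod $m$). Because the $J_{j}$ are consecutive closed fundamental intervals of the partition induced by $\mathbf{B}$, two distinct $J_{k}$ and $J_{k+d}$ can intersect only in a single point of $\mathbf{B}$. If for some $k$ one has $J_{k}\ne J_{k+d}$, then $f^{k}(y)\in\mathbf{B}$, so the entire orbit of $y$ lies in $\mathbf{B}$ and hence $d=n$, placing $y$ on the boundary of the fundamental intervals. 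If, on the other hand, $J_{k}=J_{k+d}$ for every $k$, the cycle $(J_{0},\ldots,J_{m-1})$ is the $(m/d)$-fold repetition of its initial length-$d$ segment, directly contradicting primitivity.

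The main obstacle is the edge case when the candidate $y$ coincides with a point of $\mathbf{B}$. Handling it cleanly requires either perturbing the choice of $L$ inside the construction so that its interior misses the finitely many points of $\mathbf{B}$ while preserving $f^{m}(L)\supseteq L$, or invoking a minimality argument: choose a minimal closed $L\subseteq K_{m-1}$ with $f^{m}(L)\supseteq L$; then a fixed point $y\in L$ of $f^{m}$ cannot belong to $\mathbf{B}$, for otherwise one could cut a strictly smaller subinterval of $L$ still having the required covering, contradicting minimality. With that refinement, the dichotomy in the second stage has only the first alternative available, which is ruled out by primitivity, forcing the period of $y$ to equal $m$ exactly.
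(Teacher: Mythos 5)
The paper does not actually prove this lemma --- it is imported from \cite{Straffin1978, block92} --- so there is no internal argument to compare yours against; I can only assess your proof on its own terms. Your first stage is the standard and correct construction: the covering relation $f(J_i)\supseteq J_{i+1}$ plus the pull-back sub-lemma yields the nested chain $K_0\supseteq\cdots\supseteq K_{m-1}$, the interval $L$ with $f^m(L)\supseteq L$, and hence a fixed point $y$ of $f^m$ with $f^k(y)\in J_k$. Your second-stage dichotomy is also the right one: if the least period $d$ of $y$ satisfies $d\mid m$, $d<m$, then either $J_k=J_{k+d}$ for all $k$ (the loop is a repetition, contradicting primitivity) or some $f^k(y)$ lies in two distinct fundamental intervals, hence in $\mathbf{B}$, forcing $d=n$.

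The gap is in how you dispose of the second horn, $y\in\mathbf{B}$. The minimality device does not work: for any fixed point $y$ of $f^m$ in $K_{m-1}$ the singleton $\{y\}$ already satisfies $f^m(\{y\})\supseteq\{y\}$, so a minimal admissible $L$ is a single point, which may perfectly well be a point of $\mathbf{B}$; the claim that one could then ``cut a strictly smaller subinterval still having the required covering'' is vacuous for a singleton and unproved for an interval. The perturbation variant fails for the same reason --- the IVT zero of $f^m(x)-x$ may sit at an endpoint of $L$ however the interior of $L$ is placed. What actually closes this case is cheaper and you already have the ingredients: from $d=n$ and $d\mid m$ you get $n\mid m$, so whenever $n\nmid m$ (in particular whenever $m<n$, which covers every invocation of the lemma in this paper) the second horn is empty and you are done. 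In the residual case $n\mid m$, note that the itinerary of a point of $\mathbf{B}$ through the digraph is \emph{deterministic}: if $f^k(y)=\beta_{i_k}$ is an endpoint of $J_k$ and $\beta'$ is the other endpoint of $J_k$, the edge condition $J_{k+1}\subseteq\ \langle f(\beta_{i_k}),f(\beta')\rangle$ singles out the unique fundamental interval adjacent to $f(\beta_{i_k})$ on the side of $f(\beta')$. Thus the state $(\beta_{i_k},J_k)$ evolves by a deterministic map on a set of at most $2n-2$ elements (two choices at each interior orbit point, one at each extreme), so its minimal period $p$ satisfies $p\le 2n-2$; since $p\mid m$ and $n\mid p$, this forces $p=n$, the loop is the $(m/n)$-fold repetition of a length-$n$ loop, and primitivity gives $m=n=d$, contradicting $d<m$. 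Without some argument of this kind your proof does not establish that the period of $y$ is exactly $m$.
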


\begin{lem}[Converse Straffin]
	\cite{block92} Let $f$ have a periodic point of period $n>1$ with digraph $\mathbb{D}$. Suppose $f$ is strictly monotonic on each subinterval $J_{i} = [\beta_{i} \beta_{i+1}]$ for $1 \leq i \leq n-1$. If $f$ has an orbit of period $m$ in the open interval $(\beta_{1}, \beta_{n})$ then either $\mathbb{D}$ contains a primitive cycle of length $m$, or $m$ is even and $\mathbb{D}$ contains a primitive cycle of length $m/2$.
	\label{thm:convStraffin}
\end{lem}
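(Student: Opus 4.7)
The plan is to trace the itinerary of the given period-$m$ orbit through the intervals $J_1,\ldots,J_{n-1}$ so as to read off a cycle of length $m$ in $\mathbb{D}$, and then to use the monotonicity hypothesis to force any non-primitivity to appear as a primitive factor of length exactly $m/2$.

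Let $y$ denote a point of period $m$ for $f$ lying in $(\beta_1,\beta_n)$. Since two distinct periodic orbits cannot share a point, no iterate $f^j(y)$ coincides with any $\beta_k$, so each $f^j(y)$ lies in the interior of a unique sub-interval $J_{i_j}$. Strict monotonicity of $f$ on $J_{i_j}$ gives $f(J_{i_j})=\langle f(\beta_{i_j}), f(\beta_{i_j+1})\rangle$, an interval whose endpoints lie in the period-$n$ orbit and which is therefore a union of consecutive $J_k$. Because it contains the interior point $f^{j+1}(y)\in J_{i_{j+1}}$, it must contain all of $J_{i_{j+1}}$, so $J_{i_j}\to J_{i_{j+1}}$ is an edge of $\mathbb{D}$. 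This produces a closed cycle $J_{i_0}\to J_{i_1}\to\cdots\to J_{i_{m-1}}\to J_{i_0}$ of length $m$ in $\mathbb{D}$. If it is primitive, we are done; otherwise the sequence $(i_0,\ldots,i_{m-1})$ has a smallest period $d$ with $d\mid m$ and $d<m$, and the truncated cycle $J_{i_0}\to\cdots\to J_{i_{d-1}}\to J_{i_0}$ is primitive of length $d$.

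It then remains to show $d=m/2$. I would define inductively $K_0=J_{i_0}$ and $K_s=K_{s-1}\cap f^{-s}(J_{i_s})$ for $s=1,\ldots,d$; strict monotonicity of $f$ on each $J_{i_s}$ yields, by induction, that $K_s$ is a closed sub-interval of $J_{i_0}$ and that $g:=f^d|_{K_d}$ is a continuous strictly monotone surjection onto $J_{i_0}$. The $d$-periodicity of the itinerary places every iterate $f^{jd}(y)$, $0\le j<m/d$, inside $K_d$, so they form a periodic $g$-orbit of period $m/d>1$. A continuous strictly increasing self-map of an interval admits only fixed points as periodic points (every orbit is monotone), so $g$ cannot be increasing; hence $g$ is strictly decreasing, $g^2$ is increasing on the part of its domain containing the orbit of $y$, and that $g^2$-orbit must then consist of fixed points, forcing $g^2(y)=y$ and therefore $m/d=2$. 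Consequently $m$ is even and $\mathbb{D}$ contains a primitive cycle of length $m/2$. The delicate step is this last monotonicity argument: the strict monotonicity of $f$ on each $J_i$ is needed both to guarantee that each $K_s$ is genuinely an interval and to ensure that $g$ is monotone on the entire $f^d$-orbit of $y$; loosening the hypothesis would break both the edge-extraction in $\mathbb{D}$ and the reduction to period at most $2$.
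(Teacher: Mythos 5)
The paper offers no proof of this lemma at all: it is quoted verbatim from \cite{block92} as a preliminary fact, so there is nothing in the text to compare your argument against. What you have written is, in substance, the standard proof from that reference, and it is correct. The itinerary step is sound: because the period-$m$ orbit lies in the open interval $(\beta_{1},\beta_{n})$ it is disjoint from the period-$n$ orbit, so each iterate $f^{j}(y)$ sits in the interior of a unique $J_{i_{j}}$, strict monotonicity makes $f(J_{i_{j}})$ an interval with endpoints in the $n$-orbit and hence a union of consecutive basic intervals containing $J_{i_{j+1}}$, and the least period $d$ of the resulting cyclic itinerary yields the primitive cycle. The reduction of $m/d$ to $2$ via the pulled-back interval $K_{d}$ and the monotone branch $g=f^{d}|_{K_{d}}$ is likewise the standard device. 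Two spots deserve slightly more careful wording, though neither is a gap: first, $g$ maps $K_{d}$ onto $J_{i_{0}}$ and is not literally a self-map of an interval, so the ``strictly increasing maps have only fixed periodic points'' step should be run on the finite $g$-invariant set $\{f^{jd}(y):0\le j<m/d\}\subset K_{d}$, where the order argument goes through verbatim; second, the sets $K_{s}=K_{s-1}\cap f^{-s}(J_{i_{s}})$ are intervals only because, inductively, $f^{s-1}|_{K_{s-1}}$ is strictly monotone onto $J_{i_{s-1}}$ and $f^{-1}(J_{i_{s}})\cap J_{i_{s-1}}$ is an interval --- you gesture at this, and it is precisely where the strict-monotonicity hypothesis is consumed, so it is worth making explicit.
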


	\section{Proofs of Theorems~\ref{2ndminimalissimple} \& \ref{thm:secondminodd} }
\label{sec:proofs}


	Let $f:I\rightarrow I$ be a continuous endomorphism that has a $2k+1$-orbit ($k\geq 4$) which is second minimal. Let $B = \left \{ \beta_{1}< \beta_{2} < \cdots < \beta_{2k+1}  \right \}$ be the ordered elements of this orbit; Let $r_{\ast} = \max\left \{ i\mid f(\beta_{i}) > \beta_{i} \right \}$. Such an $r_{\ast}$ exists since $f(\beta_{1}) > \beta_{1}$ and $f(\beta_{2k+1}) < \beta_{2k+1}$. So, $J_{r_{\ast}}\rightarrow J_{r_{\ast}}$; Let $B^{-} = \left \{ \beta\in B\mid \beta \leq \beta_{r_{\ast}} \right \}, B^{+} = \left \{ \beta\in B\mid \beta > \beta_{r_{\ast}} \right \}$; We have $\left | B^{-} \right | + \left | B^{+} \right | = 2k+1$ and hence $\left | B^{-} \right |\neq\left | B^{+} \right |$. Assume, without loss of generality, $\left | B^{-} \right | > \left | B^{+} \right |$. Let $r=\max\left \{ i < r_{\ast}\mid f(\beta_{i})\leq\beta_{r_{\ast}} \right \}$. We have $f(\beta_{r})\leq \beta_{r_{\ast}}$,  $f(\beta_{r+1})> \beta_{r_{\ast}}$, and hence $J_{r}\rightarrow J_{r_{\ast}}$. From Lemma \ref{preliminarylemma} it follows the existence of the subgraph

\begin{equation}
	\circlearrowright J_{r_{\ast}} \rightarrow \cdot \cdot \cdot \rightarrow J_{r} \rightarrow J_{r_{\ast}}
\label{eq:secondminfc_orig}
\end{equation}
Assume that \eqref{eq:secondminfc_orig} presents the shortest path. Since there are $2k$ intervals, its length is at most $2k+1$ and at least $2k-1$. Indeed, if its length is $2k-2$ or less, then Lemma \ref{thm:straffin} implies the existence of an odd periodic orbit of period $2k-3$ or less. Let us change the indices of intervals in \eqref{eq:secondminfc_orig} successfully as $r_{\ast}=r_{1}, \cdots, r=r_{m}$ and write path \eqref{eq:secondminfc_orig} as

\begin{equation}
	\circlearrowright J_{r_{1}} \rightarrow \cdot \cdot \cdot \rightarrow J_{r_{m}} \rightarrow J_{r_{1}}
\label{eq:secondminfc_index}
\end{equation}

\noindent where $m=2k-2$, $2k-1$, or $2k$; For simplicity we are going to use the notation $i$ for $\beta_{i}$. In the sequel the notation $\begin{matrix}a\\ b \end{matrix}$ in the second row of the cyclic permutation means that either of the entries $a$ or $b$ are valid choices for the image of the node in the same column of the first row; $J_{r_{i}}\rightarrow [a,b]$ means $f(r_{i}) = a$ and $f(r_{i}+1) = b$, the notation $\langle J_{r},J_{s} \rangle$ means the union of $J_{r}$, $J_{s}$, and all the intervals between them. Note that if $J_{r_{i}}\rightarrow J_{r_{j}}$ and $J_{r_{i}}\rightarrow J_{r_{k}}$ then $J_{r_{i}}\rightarrow\left\langle J_{r_{j}}, J_{r_{k}} \right\rangle$. 

Since \eqref{eq:secondminfc_index} is the shortest path we have

\begin{subequations}
	\begin{align}
	J_{r_{i}}& \not\rightarrow J_{r_{j}} \,\,\mathrm{for}\,\, j > i+1, 1\leq i\leq m-2;\label{seq:rules3}\\
	J_{r_{i}}& \not\rightarrow J_{r_{1}}\,\,\mathrm{for}\,\, 2\leq i \leq m-1;\label{seq:rules4}
	\end{align}	
	\label{eq:2minrulesForward}
\end{subequations}

\noindent we also have

\begin{subequations}
	\begin{align}
	J_{r_{i}}& \not\rightarrow J_{r_{j}}\,\,\mathrm{for}\,\, 1 < j < i, 4\leq i \leq m, i-j\,\,\mathrm{even};\label{seq:rules5}
	\end{align}	
	\label{eq:2minrulesBackward}
\end{subequations} 

\noindent unless $i=m=2k$, $j=2$. Indeed, otherwise according to Lemma~\ref{thm:straffin} an odd orbit of length less than $2k-1$ must exist. From \eqref{eq:2minrulesForward} and \eqref{eq:2minrulesBackward} we can infer the relative position of the intervals to be either

\begin{figure}[htb]
	\centering
	\begin{tikzpicture}
		\draw (1,0)--(9.0,0);
		\foreach \num/\idx in {1,2,...,9}
			{
			\draw (\num,0.2)--(\num,-0.2); 
			}
		\foreach \label/\loc in {\cdots/1.5, J_{r_{5}}/2.5, J_{r_{3}}/3.5, J_{r_{1}}/4.5, J_{r_{2}}/5.5, J_{r_{4}}/6.5, J_{r_{6}}/7.5, \cdots/8.5}
			{
			\node[above] at (\loc, 0) {$\label$};
			}
	\end{tikzpicture}
	\caption{Relative positions of intervals in the sub-graph of length $m$ when $J_{r_{2}}$ to the right of $J_{r_{1}}$}
	\label{fig:secminint1}
\end{figure}

\noindent or

\begin{figure}[htb]
	\centering
	\begin{tikzpicture}
		\draw (1,0)--(9.0,0);
		\foreach \num/\idx in {1,2,...,9}
			{
			\draw (\num,0.2)--(\num,-0.2); 
			}
		\foreach \label/\loc in {\cdots/1.5, J_{r_{6}}/2.5, J_{r_{4}}/3.5, J_{r_{2}}/4.5, J_{r_{1}}/5.5, J_{r_{3}}/6.5, J_{r_{5}}/7.5, \cdots/8.5}
			{
			\node[above] at (\loc, 0) {$\label$};
			}
	\end{tikzpicture}
	\caption{Relative positions of intervals in the sub-graph of length $m$ when $J_{r_{2}}$ to the left of $J_{r_{1}}$}
	\label{fig:secminint2}
\end{figure}
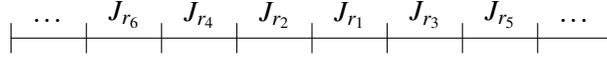

If $m=2k$ then the path \eqref{eq:secondminfc_index} contains all $2k$ intervals. From the proof of Theorem \ref{thm:stefan}  (for example, see Proposition 8 in \cite{block92}) it follows that the corresponding periodic orbit is Stefan orbit. 

\begin{lem}
	The case when $m=2k-1$ produces exactly $2$ second minimal $(2k+1)$-orbits. Cyclic permutations are given in \eqref{eq:len2kcase11cyc} and \eqref{eq:len2kcase22cyc},  and the corresponding digraphs are presented in Fig.~\ref{fig:len2kcase11} and Fig.~\ref{fig:len2kcase22} respectively.

	\small{
	\begin{equation}
		\left(\begin{array}{cccccccccc}
			1   & 2 & \cdots & k-2 & k-1 & k   & k+1 & k+2 & \cdots \\ 
			k+1 & 2k+1 & \cdots & k+5 & k+2 & k+4 & k+3 & k   & \cdots \\
		\end{array} \right)					
		\label{eq:len2kcase11cyc}
	\end{equation}}
	\small{
	\begin{equation}		
			\left(\begin{array}{cccccccccc}
				1 & 2 &\cdots & k-2 & k-1 & k   & k+1 & k+2 & k+3 & \cdots \\ 
				k & 2k+1 & \cdots & k+5 & k+4 & k+2 & k+3 & k+1 & k-1 & \cdots \\
			\end{array} \right)					
		\label{eq:len2kcase22cyc}
	\end{equation}}
	
	\input{digraphs/digraphLen2km1-1.tex}
	
	\input{digraphs/digraphLen2km1-2.tex}
	
	\label{lem:m2km1}
\end{lem}
\begin{proof}
	When the length of the path \eqref{eq:secondminfc_index} is $2k-1$ we have exactly one interval, call it $\tilde{J}$, missing. Since $J_{r_{2k-1}}\rightarrow J_{r_{1}}$, $J_{r_{2k-1}}\not\rightarrow J_{r_{i}}$ for $i>1$ odd, one of the endpoints of $J_{r_{2k-1}}$ must be mapped to some element of the orbit which separates $J_{r_{1}}$ and $J_{r_{3}}$. Since $\circlearrowright J_{r_{1}}\rightarrow J_{r_{2}}$, but $J_{r_{1}}\not\rightarrow J_{r_{3}}$ it follows that the endpoint of $J_{r_{1}}$ which separates $J_{r_{1}}$ and $J_{r_{2}}$, must be mapped to the element of the  orbit which separates $J_{r_{1}}$ and $J_{r_{3}}$. Therefore, unless there is an interval between $J_{r_{1}}$ and $J_{r_{3}}$, the element of the orbit which separates $J_{r_{1}}$ and $J_{r_{3}}$ will be an image of two distinct elements of the orbit. Hence, $\tilde{J}$ is between $J_{r_{1}}$ and $J_{r_{3}}$, and the distribution of the intervals is as in Fig.~\ref{fig:set3in2k} or Fig.~\ref{fig:set3in2k} reflected about the center point $k+1$. Note that the distribution described in Fig.~\ref{fig:set3in2k} is relevant due to our assumption $\left | B^{-} \right | > \left | B^{+} \right |$. The other case will provide the associated inverse digraph with $\left | B^{+} \right | > \left | B^{-} \right |$. Hence, the structure is as it is described in Fig.~\ref{fig:set3in2k}.

	\begin{figure}[htb]
		\centering
		\begin{tikzpicture}
			\draw (1,0)--(11.0,0);
			\foreach \num/\idx in {1,2,...,11}
				{
				\draw (\num,0.2)--(\num,-0.2); 
				}
			\foreach \label/\loc in {J_{r_{2k-1}}/1.5, J_{r_{2k-3}}/2.5, \cdots/3.5, J_{r_{3}}/4.5, \tilde{J}/5.5, J_{r_{1}}/6.5, J_{r_{2}}/7.5, \cdots/8.5, J_{r_{2k-4}}/9.5, J_{r_{2k-2}}/10.5}
				{
				\node[above] at (\loc, 0) {$\label$};
				}
			\foreach \label/\loc in {1/1, 2/2, 3/3, k-1/4, k/5, k+1/6, k+2/7, 2k-1/9, 2k/10, 2k+1/11}
				{
				\node[below] at (\loc, -0.2) {\small{$\label$}};
				}		
		\end{tikzpicture}
		\caption{Complete interval for case when $m=2k-1$ with $\left | B^{-} \right | > \left | B^{+} \right |$}
		\label{fig:set3in2k}
	\end{figure}
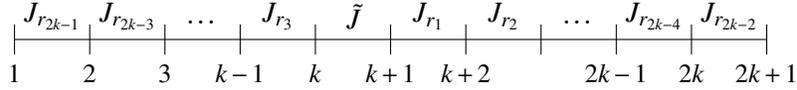

	\noindent This implies the following cyclic permutation. Note the possible alternation of images of elements $1$, $k+2$, $k-1$, and $k$.

	\small{
	\begin{equation}
		\left(\begin{array}{ccccccccc}
			1 & \cdots & k-1 & k & k+1 & k+2 & k+3 & k+4 & \cdots \\ 
			\begin{matrix} k+1\\ k \end{matrix} & \cdots & \langle k+2, & k+4\rangle & k+3 & \begin{matrix} k\\ k+1 \end{matrix} & k-1 & k-2 &\cdots\\
		\end{array} \right)		
	\end{equation}
}
	\begin{enumerate}
		\item Case $(1)$: $f(k-1)=k+2 \Rightarrow f(k)=k+4$
		\item Case $(1)_{1}$: $f(1)=k+1 \Rightarrow f(k+2)=k$; This produces a simple positive type $(2k+1)$-orbit given in \eqref{eq:len2kcase11cyc} and Fig.~\ref{fig:len2kcase11} with topological structure max-min-max. Next we analyze the digraph to show that there are no primitive cycles of even length $\leq 2k-2$, which would imply by Straffin's lemma an existence of odd periodic orbit of length $\leq 2k-3$. From Lemms~\ref{thm:convStraffin} it then follows that the the $P$-linearization of the orbit \eqref{eq:len2kcase11cyc} presents an example of continuous map with second minimal $(2k+1)$-orbit. 
We split the analysis into two cases:
		
			\begin{enumerate}
				\item Consider primitive cycles that contain $J_{1}$. Without loss of generality choose $J_1$ as the starting vertex. First assume that cycle doesn't start with chain $J_1\rightarrow J_{k+1}$. Since  $J_{2k+1-i}{\color{Red}\rightarrow}J_{i},\ i=1,...,k-1,$ any such cycle can be formed only by adding on to the starting vertex $J_1$ pairs $( J_{2k+1-i}, J_{i}), i=1,...,k-1$. Therefore, the length of the cycle (by counting $J_1$ twice) will be always an odd number. On the contrary, if cycle starts with chain $J_1\rightarrow J_{k+1}$, then to close it at $J_1$ the smallest required even length is $2k$.
				\item Consider primitive cycles that don't contain $J_{1}$. Obviously, such a cycle doesn't contain $J_{2},J_{3},\dots,J_{k-3}$ or $J_{k+4},J_{k+5},\dots, J_{2k-1}, J_{2k}$ since these vertices have red edges connecting them all the way to $J_{1}$. Additionally, this cycle cannot contain $J_{k+1}$ or $J_{k}$ since $J_{1}$ is the only vertex (besides $J_{k+1}$ itself) with a directed edge to $J_{k+1}$, and $J_{k+1}$ is the only vertex with directed edge to $J_{k}$. This leaves $4$ vertices: $J_{k-2}, J_{k-1}, J_{k+2}, J_{k+3}$. Since  $J_{k+2}{\color{Red}\rightarrow}J_{k-1}$, $J_{k+3}{\color{Red}\rightarrow}J_{k-2}$ and $J_{k+2}\leftrightharpoons J_{k-1}$, $J_{k+3}\leftrightharpoons J_{k-2}$, any cycle formed by these four vertices will consist of a starting vertex followed (or ending vertex preceded) by pairs $(J_{k+2}, J_{k-1})$, $(J_{k+3}, J_{k-2})$ added arbitrarily many times, and hence no cycles of even length can be produced. 
			\end{enumerate}	
			
		\item Case $(1)_{2}$: $f(1)=k \Rightarrow f(k+2)=k+1$, then we have the period $4$-suborbit\\ $\left\{ k-1, k+1, k+2, k+3 \right\}$, a contradiction.
		\item Case $(2)$: $f(k-1)=k+4 \Rightarrow f(k)=k+2$
		\item Case $(2)_{1}$: $f(1)=k+1 \Rightarrow f(k+2)=k$ then we have the period $2$-suborbit $\left\{ k, k+2 \right\}$, a contradiction.
		\item Case $(2)_{2}$: $f(1)=k \Rightarrow f(k+2)=k+1$; This produces a simple positive type $(2k+1)$-orbit given in \eqref{eq:len2kcase22cyc} and Fig.~\ref{fig:len2kcase22} with topological structure max-min-max. We repeat the argument from Case $(1)_{1}$. First we analyze the digraph to show that there are no primitive cycles of even length $\leq 2k-2$. 
We split the analysis into two cases:
		
			\begin{enumerate}
				\item Consider primitive cycles that contain $J_{1}$. Without loss of generality choose $J_1$ as a starting vertex. First assume that cycle starts with the edge $J_1\rightarrow J_{j}$, with $j$ taking any value between $k+3$ and $2k$. Since  $J_{2k+1-i}{\color{Red}\rightarrow}J_{i},\ i=1,...,k-2,$ any such cycle can be formed only by adding to starting vertex $J_1$ pairs $( J_{2k+1-i}, J_{i}), i=1,...,k-2$. Therefore, the length of the cycle (by counting $J_1$ twice) will be always an odd number. If the cycle starts with the edge $J_1\rightarrow J_{k+2}$, then the only difference from the previous case will be the addition of the pairs  $( J_{k+2}, J_{k-1})$ and/or $( J_{k+2}, J_{k})$ arbitrarily many times. Hence, only cycles of odd length will be produced. On the contrary, if the cycle starts with the chain $J_1\rightarrow J_{k+1}$ or $J_1\rightarrow J_{k}$, then to close it at $J_1$ the smallest required even length is $2k$.
				\item Consider primitive cycle that doesn't contain $J_{1}$. Obviously, such a cycle doesn't contain $J_{2},J_{3},\dots,J_{k-2}$ or $J_{k+3},J_{k+4},\dots, J_{2k-1}, J_{2k}$ since these vertices have red edges connecting them all the way to $J_{1}$. Additionally, this cycle cannot contain $J_{k+1}$ since $J_{1}$ is the only vertex (besides $J_{k+1}$ itself) with directed edge to $J_{k+1}$. This leaves $3$ vertices: $J_{k}, J_{k-1}, J_{k+2}$ connected as $J_{k} \leftrightharpoons J_{k+2} \leftrightharpoons J_{k-1}$. Therefore, this triple can only produce cycles when pairs $( J_{k+2}, J_{k-1})$ and $( J_{k+2}, J_{k})$ are added to a starting vertex. Therefore, no cycle of even length can be produced. 
			\end{enumerate}		
	\end{enumerate}
\end{proof}	
		
		Now, observe that when the length, $m$, of path \eqref{eq:secondminfc_index} is $2k-2$ it is comprised of $2k-2$ distinct intervals and thus there are $2$ additional intervals required to complete the periodic orbit of period $2k+1$. From path \eqref{eq:secondminfc_index} and the rules \eqref{eq:2minrulesForward} it follows that the relative distribution of the $2k-2$ intervals is in one of the following $2$ forms illustrated in Fig.~\ref{fig:case2km1Right} or Fig.~\ref{fig:case2km1Left}. Call the two missing intervals $\tilde{J}$ and $\hat{J}$. There are $2k-1$ slots in Fig.~\ref{fig:case2km1Right} or Fig.~\ref{fig:case2km1Left} where we can place \textit{each} of these extra intervals for a total of $(2k-1)^{2}$ pairs. However, since swapping the locations of $\tilde{J}$ and $\hat{J}$ does not affect the analysis let us consider the distribution given in Fig.~\ref{fig:case2km1Left} in the upper triangular matrix \eqref{eq:possSettings2km1} where $(i,j)$ indicates placing $\tilde{J}$ in position $i$ and $\hat{J}$ in position $j$.
		
\begin{figure}[htb]
	\centering
	\begin{tikzpicture}
		\draw (1,0)--(11.0,0);
		\foreach \num/\idx in {1,2,...,11}
			{
			\draw (\num,0.2)--(\num,-0.2); 
			}
		\foreach \label/\loc in {J_{r_{2k-3}}/1.5, \cdots/2.5, J_{r_{5}}/3.5, J_{r_{3}}/4.5, J_{r_{1}}/5.5, J_{r_{2}}/6.5, J_{r_{4}}/7.5, J_{r_{6}}/8.5, \cdots/9.5, J_{r_{2k-2}}/10.5}
			{
			\node[above] at (\loc, 0) {$\label$};
			}
			\node (5) at (5,0) {};
			\node (6) at (6,0) {};
			\node (7) at (7,0) {};
			\path[->] (6) edge[bend left=90] node{} (5);
			\path[->] (5) edge[bend left=65] node{} (7);			
	\end{tikzpicture}
	\caption{Relative positions of intervals in the sub-graph of length $m=2k-2$ when $J_{r_{2}}$ to the right of $J_{r_{1}}$}
	\label{fig:case2km1Right}
\end{figure}
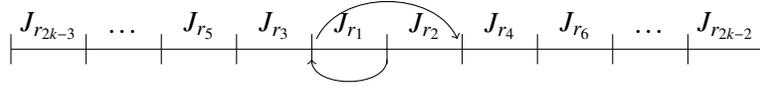

\noindent or

\begin{figure}[htb]
	\centering
	\begin{tikzpicture}
		\draw (1,0)--(11.0,0);
		\foreach \num/\idx in {1,2,...,11}
			{
			\draw (\num,0.2)--(\num,-0.2); 
			}
		\foreach \label/\loc in {J_{r_{2k-2}}/1.5, \cdots/2.5, J_{r_{6}}/3.5, J_{r_{4}}/4.5, J_{r_{2}}/5.5, J_{r_{1}}/6.5, J_{r_{3}}/7.5, J_{r_{5}}/8.5, \cdots/9.5, J_{r_{2k-3}}/10.5}
			{
			\node[above] at (\loc, 0) {$\label$};
			}
			\node (5) at (5,0) {};
			\node (6) at (6,0) {};
			\node (7) at (7,0) {};			
			\path[->] (6) edge[bend left=90] node{} (7);
			\path[->] (7) edge[bend left=65] node{} (5);				
	\end{tikzpicture}
	\caption{Relative positions of intervals in the sub-graph of length $m=2k-2$ when $J_{r_{2}}$ to the left of $J_{r_{1}}$}
	\label{fig:case2km1Left}
\end{figure}		
		
\begin{equation}
	\begin{matrix}
		(1,1) & (1,2) & (1,3) & \cdots & (1, 2k-3) & (1,2k-2) & (1,2k-1) \\
					& (2,2) & (2,3) & \cdots & (2, 2k-3) & (2,2k-2) & (2,2k-1) \\
					&       & (3,3) & \cdots & \vdots    &          & \vdots 	 \\			
					&	      &       & \ddots & (i, i)    & \cdots   & (i,2k-i+2) \\						
					&	      &       &        & \ddots	   & \vdots   & \vdots   \\						
					&       &       &        &           & \ddots   & (2k-1,2k-1) 
	\end{matrix}
	\label{eq:possSettings2km1}
\end{equation}		

The next lemma specifies all the entries $(i,j)$ in matrix \eqref{eq:possSettings2km1} , such that insertion of $(\tilde{J}, \hat{J})$ in $(i,j)$ can produce second minimal odd orbits.

\begin{lem}
	Fix the entry point, $i$, for $\tilde{J}$, then to produce second minimal $2k+1$ orbit, $\hat{J}$ can only be placed in
	\begin{enumerate}
		\item position $2k-1$ when $i=1$,
		\item positions $2k-i$ or $2k-i+1$ for $1 < i < k$, 
		\item and position $k+1$ when $i=k$.
	\end{enumerate}
	\label{lem:JhatJtilde}
\end{lem}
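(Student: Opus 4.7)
The plan is to traverse all $(i,j)$ entries of the matrix \eqref{eq:possSettings2km1}, determine the digraph forced by inserting $\tilde{J}$ at slot $i$ and $\hat{J}$ at slot $j$, and eliminate every configuration that either creates a primitive cycle of odd length $\leq 2k-3$ (contradicting second minimality via Lemma~\ref{thm:straffin}) or violates the shortest-path constraints \eqref{eq:2minrulesForward}--\eqref{eq:2minrulesBackward}.

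First I would label the $2k-1$ slots in Fig.~\ref{fig:case2km1Left} from left to right as $1, 2, \dots, 2k-1$, noting that slot $k$ lies immediately to the left of $J_{r_1}$ and slot $k+1$ immediately to its right, so the fixed-point loop sits at the middle. For each insertion, I would identify the two endpoints of the newly inserted interval as orbit elements and determine where each must map: one endpoint is shared with a neighboring backbone interval, so continuity fixes its image via an existing edge, while the other is new and must be assigned to an orbit element in a way consistent with the path \eqref{eq:secondminfc_index} closing up into a single $(2k+1)$-cycle.

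The argument then splits into ranges of $i$. For $i=1$ (slot at the far left), the freely mapped endpoint of $\tilde{J}$ forces a short return edge unless $\hat{J}$ is placed at the far right, $j=2k-1$; otherwise a primitive odd cycle of length $\leq 2k-3$ appears. For $1 < i < k$, the forward image of the new endpoint of $\tilde{J}$ propagates a determined distance along the backbone, and only the placements $j=2k-i$ and $j=2k-i+1$ avoid creating a shortcut that closes a primitive cycle of length $< 2k-1$; all other $j$ either produce a forbidden odd cycle or fail to account for all $2k+1$ orbit elements. Finally, for $i=k$, the only placement of $\hat{J}$ yielding a valid configuration is $j=k+1$, its mirror slot across $J_{r_1}$.

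The main obstacle is the middle range $1 < i < k$: here both inserted intervals interact with the loop $J_{r_1}$ and with many backbone edges simultaneously, and one must rule out both the numerous primitive odd cycles formed by combining backbone edges with the new edges from $\tilde{J}$ and $\hat{J}$, and the configurations in which a required edge would force two distinct orbit elements to share an image. The claim that precisely two positions ($j=2k-i$ and $j=2k-i+1$) survive reflects a single slot of flexibility coming from the freedom to send a new endpoint to one of two neighboring orbit elements, and isolating this flexibility is the technical heart of the argument.
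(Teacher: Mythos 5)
Your outline matches the paper's strategy at the top level (case analysis on the ranges $i=1$, $1<i<k$, $i=k$, using the constraints \eqref{eq:2minrulesForward}--\eqref{eq:2minrulesBackward} and Lemma~\ref{thm:straffin}), but as written it asserts every decisive step rather than proving it, and the mechanism you propose for the eliminations is not the one that actually does the work. The paper's central device is a pigeonhole on orbit points, not primitive-cycle counting: since $J_{r_{2k-2}}\rightarrow J_{r_{1}}$ but $J_{r_{2k-2}}\not\rightarrow J_{r_{2}}$ by \eqref{seq:rules5}, an endpoint of $J_{r_{2k-2}}$ must map to the unique orbit point separating $J_{r_{1}}$ and $J_{r_{2}}$, so that point is unavailable to any other interval that needs to overshoot leftward past $J_{r_{1}}$. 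This single observation is what rules out the placements $k+1\le j\le 2k-i-1$ for $1<i<k$ and the placements of $\hat{J}$ strictly between $J_{r_{1}}$ and $J_{r_{2k-3}}$ when $i=1$: the required containment $J_{r_{s}}\supset J_{r_{s+1}}$ for the interval just right of $\hat{J}$ becomes unrealizable because the only admissible target point is already taken. Your phrase ``creates a shortcut that closes a primitive cycle'' does not capture this; most excluded placements fail because a required edge of the digraph cannot be realized at all, not because a forbidden cycle appears. Likewise, the necessity half for $1<i<k$ --- that $\hat{J}$ must lie between $J_{r_{1}}$ and $J_{r_{w+1}}$ with $w=2(k-i+1)$ --- rests on the fact that $J_{r_{w}}\rightarrow J_{r_{w+1}}$ forces an endpoint of $J_{r_{w}}$ to land strictly between $J_{r_{1}}$ and $J_{r_{w+1}}$, and no orbit point exists there unless $\hat{J}$ supplies one; this step is absent from your plan.

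Two cases are missing outright. First, the settings with $k<i\le 2k-1$ (both inserted intervals to the right of $J_{r_{1}}$) must be shown to produce nothing, or the lemma's ``can only be placed'' is not established; the paper does this by showing $J_{r_{2i-2k+1}}$ cannot contain $J_{r_{2i-2k+2}}$ once the point left of $J_{r_{1}}$ is claimed by $J_{r_{2k-2}}$. Second, the setting $(1,1)$ does not reduce to any other case by relabeling and needs its own argument; the paper carries out the forced Stefan-like construction and finds that every completion of the permutation yields a $2$- or $3$-cycle among $\hat{J}$, $\tilde{J}$, $J_{r_{2k-2}}$, $J_{r_{2k-3}}$. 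Your blanket statement for $i=1$ covers neither of these. So while the skeleton is right, the proof as proposed has genuine gaps at exactly the points where the work is concentrated.
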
	
\begin{proof}
	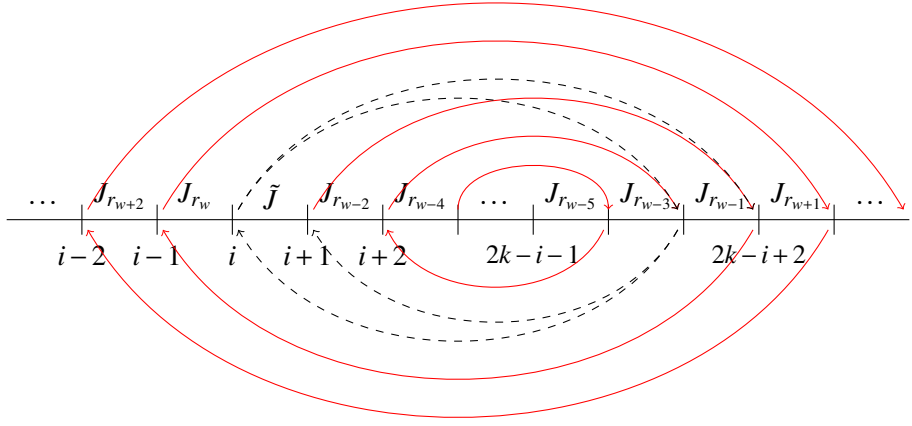
\begin{figure}[ht]%
		\centering
			\begin{tikzpicture}
				\draw (1,0)--(13.0,0);
				\foreach \num/\idx in {2,3,...,12}
					{
					\draw (\num,0.2)--(\num,-0.2); 
					\node (\num) at (\num,0) {};
					}
					\node (1) at (1,0) {};
					\node (13) at (13,0) {};
					\node[below] at (2,-0.21) {$i-2$};
					\node[below] at (3,-0.21) {$i-1$};
					\node[below] at (4,-0.21) {$i$};
					\node[below] at (5,-0.21) {$i+1$};
					\node[below] at (6,-0.21) {$i+2$};
					
					\node[below] at (8,-0.21) {\small{$2k-i-1$}};
					\node[below] at (11,-0.21) {\small{$2k-i+2$}};						
					
				\foreach \label/\loc in {\cdots/1.5, J_{r_{w+2}}/2.5, J_{r_{w}}/3.5, \tilde{J}/4.5, J_{r_{w-2}}/5.5, J_{r_{w-4}}/6.5, \cdots/7.5, J_{r_{w-5}}/8.5, J_{r_{w-3}}/9.5, J_{r_{w-1}}/10.5, J_{r_{w+1}}/11.5, \cdots/12.5}
					{
					\node[above] at (\loc, 0) {$\label$};
					}
				
				\path[->] (7) edge[red, bend left=90] node{} (9);
				\path[->] (9) edge[red, bend left=65] node{} (6);
				\path[->] (6) edge[red, bend left=60] node{} (10);		
				\path[->] (5) edge[red, bend left=60] node{} (11);		
				\path[->] (3) edge[red, bend left=60] node{} (12);	
				\path[->] (11) edge[red, bend left=60] node{} (3);

				\path[dashed,->] (10) edge[bend left=60] node{} (4);
				\path[dashed,->] (10) edge[bend left=60] node{} (5);
				\path[dashed,->] (4) edge[bend left=60] node{} (10);
				\path[dashed,->] (4) edge[bend left=60] node{} (11);
				\path[->] (12) edge[red, bend left=60] node{} (2);
				\path[->] (2) edge[red, bend left=60] node{} (13);
			\end{tikzpicture}	
			\caption{Relative ordering when $m=2k-2$ and $\tilde{J}$ is in position $i$}%
			\label{fig:fixJtildein3}%
	\end{figure}				

Let $1 < i < k$ for $k>3$. Let the interval immediately to the left of $\tilde{J}$ be called $J_{r_{w}}$, or, the $w$-th distinct interval in path \eqref{eq:secondminfc_index}. From the relative positions of the intervals in Fig.~\ref{fig:secminint2} it is clear that $w = 2(k-i+1)$. As depicted in Fig.~\ref{fig:fixJtildein3}, the intervals to the right of $\tilde{J}$, have a set path. $J_{r_{w-4}}$ maps only to $J_{r_{w-3}}$, $J_{r_{w-2}}$ maps only to $J_{r_{w-1}}$, etc. Note that while the exact points that these intervals map to might change upon the insertion of $\hat{J}$, the overall structure must remain the same in order to preserve path \eqref{eq:secondminfc_index}.
	
	However, this pattern can no longer be continued indefinitely for interval $J_{r_{w}}$. $J_{r_{w}}$ must be mapped to $J_{r_{w+1}}$, by definition. This implies that one end point of $J_{r_{w}}$ is mapped arbitrarily to the left of $J_{r_{w+1}}$, and the other endpoint is mapped arbitrarily to the right of $J_{r_{w+1}}$. According to \eqref{eq:2minrulesForward}, $J_{r_{w}}$ can't map to any odd interval greater than $J_{r_{w+1}}$. Assuming that $J_{r_{w+1}}$ isn't the rightmost interval, or in other words $w+1<2k-3$, then the endpoint of $J_{r_{w}}$ that maps to the right of $J_{r_{w+1}}$, must map to a point separating $J_{r_{w+1}}$ and $J_{r_{w+3}}$. This could either be a point directly in between $J_{r_{w+1}}$ and $J_{r_{w+3}}$, or a new point between $J_{r_{w+1}}$ and $J_{r_{w+3}}$, upon the insertion of $\hat{J}$.
	
	Note that if $w+1=2k-3$, or in other words $J_{r_{w+1}}$ is the rightmost interval, then $J_{r_{w+3}}$ no longer exists. Rather than mapping to a point separating $J_{r_{w+1}}$ and $J_{r_{w+3}}$, an endpoint of $J_{r_{w}}$ will just map to the right of $J_{r_{w+1}}$.
	
	While it is clear how one endpoint of $J_{r_{w}}$ will map to the right of $J_{r_{w+1}}$, it is much less clear how an endpoint of $J_{r_{w}}$ will map to the left of $J_{r_{w+1}}$. According to \eqref{eq:2minrulesBackward}, $J_{r_{w}}$ can not map to $J_{r_{1}}$ or any $J_{r_{k}}$ where $k<w$ and is even. Thus, the arbitrary point to the left of $J_{r_{w+1}}$ that an endpoint of $J_{r_{w}}$ must map to, must be separating $J_{r_{1}}$ and $J_{r_{w+1}}$. Thus, the missing interval $\hat{J}$ must be inserted between $J_{r_{1}}$ and $J_{r_{w+1}}$. Note that according to the previously mentioned positional notation, this includes all the positions $k+1$, $k+2$, \ldots, $2k-i$, $2k-i+1$.
	
	Suppose that the interval $\hat{J}$ is inserted into a position to the left of $2k-i$. Assume, without loss of generality, that the interval $\hat{J}$ is inserted into position $2k-i-1$ $($between intervals $J_{r_{w-3}}$ and $J_{r_{w-5}}$ if $w \geq 6$, and between $J_{r_{1}}$ and $J_{r_{2}}$ if $w=4$), as the contradiction will be the same. The interval immediately to the right of $\hat{J}$ has some trouble mapping to the interval indexed one greater than itself. In the situation where $\hat{J}$ is placed in position $2k-i-1$, the interval immediately to the right of $\hat{J}$ is $J_{r_{w-3}}$, which has trouble mapping to $J_{r_{w-2}}$. One endpoint of $J_{r_{w-3}}$ must map to the left of $J_{r_{w-2}}$, while the other must map to the right of $J_{r_{w-2}}$. Again, according to \eqref{eq:2minrulesBackward}, $J_{r_{w-3}}$ can't map to a lesser odd interval, or $J_{r_{1}}$. Thus, the only available point to the left of $J_{r_{1}}$ and to the right of $J_{r_{w-2}}$, is the point indexed $k+1$, or the left endpoint of $J_{r_{1}}$. However, it is important to note that the largest indexed interval, $J_{r_{2k-2}}$, must also map back to $J_{r_{1}}$. And according to \eqref{eq:2minrulesBackward}, $J_{r_{2k-2}}$ can't map to a lesser even interval, specifically including $J_{r_{2}}$. Therefore, an endpoint of $J_{r_{2k-2}}$ must map to the left of $J_{r_{1}}$, but can not map to the left of $J_{r_{2}}$. The only point that fits this description is the one indexed $k+1$. Thus, the point $k+1$ is already taken, and an endpoint of $J_{r_{w-3}}$ can not map to the point indexed $k+1$. Furthermore, there are no open points that are both to the right of $J_{r_{w-2}}$ and to the left of $J_{r_{1}}$. This is an immediate contradiction, as it is now impossible for an endpoint of $J_{r_{w-3}}$ to map to the right of $J_{r_{w-2}}$, making it impossible for $J_{r_{w-3}}$ to contain $J_{r_{w-2}}$. 
	
	Now suppose instead $w=4$. If this is the case, then $\hat{J}$ is inserted between $J_{r_{1}}$ and $J_{r_{2}}$. Again, the image of $J_{r_{1}}$ must contain only $J_{r_{1}}$ and $J_{r_{2}}$. Thus, one endpoint of $J_{r_{1}}$ must map to the right of $J_{r_{1}}$, but can not map to the right of $J_{r_{3}}$. Therefore, the left endpoint of $J_{r_{1}}$ must map to the point separating $J_{r_{1}}$ and $J_{r_{3}}$. Now, $J_{r_{2}}$ must map to $J_{r_{3}}$, but can not map back to $J_{r_{1}}$. Thus, one of the endpoints of $J_{r_{2}}$ must map to a point separating $J_{r_{1}}$ and $J_{r_{3}}$. Both the left endpoint of of $J_{r_{1}}$ and an endpoint of $J_{r_{2}}$ must map to points separating $J_{r_{1}}$ and $J_{r_{3}}$. There is only one point separating $J_{r_{1}}$ and $J_{r_{3}}$, so this is an immediate contradiction.
	
	Therefore, it is impossible for $\hat{J}$ to be inserted to the left of position $2k-i$, because the interval immediately to the right of $\hat{J}$ can no longer map to the interval indexed one greater than itself, in the case of $w \geq 6$. In the case of $w=4$, a separate contradiction arises when both the left endpoint of of $J_{r_{1}}$ and an endpoint of $J_{r_{2}}$ must map to the singular point separating $J_{r_{1}}$ and $J_{r_{3}}$. The only two possible positions of $\hat{J}$ that can produce valid second minimal orbits when $\tilde{J}$ is inserted into position $i$, are the positions $2k-i$ and $2k-i+1$.
		
		

		Suppose that $\tilde{J}$ is inserted into position $k$. By \eqref{eq:2minrulesForward}, the image of $J_{r_{1}}$ must contain itself. This means, by definition that one endpoint of $J_{r_{1}}$ must map to the right of itself, and the other endpoint of $J_{r_{1}}$ must map to the left of itself. However, by \eqref{eq:2minrulesForward}, $J_{r_{1}}$ can not map to $J_{r_{3}}$. Thus, the endpoint of $J_{r_{1}}$ that maps to the right of itself, must map to the left of $J_{r_{3}}$. In other words, one endpoint of $J_{r_{1}}$ must map to a point separating $J_{r_{1}}$ and $J_{r_{3}}$. By \eqref{eq:2minrulesForward}, the image of $J_{r_{2}}$ must contain $J_{r_{3}}$. Again, this means that one endpoint of $J_{r_{2}}$ maps to the left of $J_{r_{3}}$, and the other endpoint of $J_{r_{2}}$ maps to the right of $J_{r_{3}}$. However, by \eqref{eq:2minrulesBackward}, the image of $J_{r_{2}}$ can not contain $J_{r_{1}}$. Thus, the endpoint of $J_{r_{2}}$ that maps to the left of $J_{r_{3}}$ can not map to the left of $J_{r_{1}}$. In other words, an endpoint of $J_{r_{2}}$ must map to points separating $J_{r_{1}}$ and $J_{r_{3}}$. Both an endpoint of $J_{r_{1}}$ and an endpoint $J_{r_{2}}$ must map to points separating $J_{r_{1}}$ and $J_{r_{3}}$. Note that, when $\tilde{J}$ was inserted into position $k$, $J_{r_{1}}$ and $J_{r_{2}}$ no longer shared an endpoint. Thus, the two endpoints that map to points separating $J_{r_{1}}$ and $J_{r_{3}}$ must necessarily be two different points. Thus, there must be at least two different points separating $J_{r_{1}}$ and $J_{r_{3}}$. This can only be achieved by inserting $\hat{J}$ between $J_{r_{1}}$ and $J_{r_{3}}$, which is position $k+1$. Or in other words, if $\tilde{J}$ is inserted into position $k$, a valid second minimal orbit can only be constructed if $\hat{J}$ is inserted into position $k+1$.
		
		Finally, suppose that $\tilde{J}$ is inserted into position $1$. There are two options here:
		\begin{enumerate}
			\item $\hat{J}$ is inserted arbitrarily to the left of $J_{r_{1}}$.
			\item $\hat{J}$ is inserted arbitrarily to the right of $J_{r_{1}}$.
		\end{enumerate}
		
		Suppose that $\hat{J}$ is inserted arbitrarily to the left of $J_{r_{1}}$. If $\hat{J}$ is in any position $j$, where $2 \leq j \leq k$. By simply changing the notation, where $\hat{J}$ becomes $\tilde{J}$ and $\tilde{J}$ becomes $\hat{J}$, we suddenly have the cases already addressed earlier in the proof, with $\tilde{J}$ being arbitrarily between $J_{r_{2k-2}}$ and $J_{r_{1}}$. Note that for none of the cases where $2 \leq i \leq k$, position $1$ was not a valid position for $\hat{J}$. Thus, the only case that hasn't been considered is when both $\tilde{J}$ and $\hat{J}$ are in position $1$. For the sake of simplicity, assume that $\hat{J}$ is the first interval, and $\tilde{J}$ is the interval between $\hat{J}$ and $J_{r_{2k-2}}$.
		
		It follows from \eqref{eq:2minrulesForward} that the image of $J_{r_{1}}$ must contain only itself and $J_{r_{2}}$. This is only possible if the left endpoint of $J_{r_{1}}$, indexed $k+2$ maps to the right endpoint of $J_{r_{1}}$, which is indexed $k+3$; and the right endpoint of $J_{r_{1}}$ maps to the left endpoint of $J_{r_{2}}$, indexed $k+1$. By \eqref{eq:2minrulesForward}, $J_{r_{2}}$ must contain $J_{r_{3}}$ and no odd interval with a greater index, which is only possible if the left endpoint of $J_{r_{2}}$, indexed $k$, maps to the right endpoint of $J_{r_{3}}$, indexed $k+4$. Furthermore, by \eqref{eq:2minrulesForward} every even interval can contain only the odd interval indexed one greater than itself and no greater odd interval, and conversely, every odd interval can contain only the even interval indexed one greater than itself and no greater even interval. This fact forces the intervals to follow the structure depicted in \cite{Stefan1977}. This pattern continues until the left endpoint of $J_{r_{2k-3}}$ maps to the right endpoint of $J_{r_{2k-2}}$, and the left endpoint of $J_{r_{2k-4}}$ maps to the right of $J_{r_{2k-3}}$. This construction yields Fig.~\ref{fig:fixJtildein1}, where the solid red lines represent the Stefan-like structure present when $\tilde{J}$ and $\hat{J}$ are both placed in Position $1$.
		
	\begin{figure}[ht]%
		\centering
			\begin{tikzpicture}
				\draw (1,0)--(13.0,0);
				\foreach \num/\idx in {1,2,...,13}
					{
					\draw (\num,0.2)--(\num,-0.2); 
					\node (\num) at (\num,0) {};
					}
					\node (1) at (1,0) {};
					\node (13) at (13,0) {};
					\node[below] at (1,-0.21) {\small{$1$}};
					\node[below] at (2,-0.21) {\small{$2$}};
					\node[below] at (3,-0.21) {\small{$3$}};
					\node[below] at (4,-0.21) {\small{$4$}};
					\node[below] at (5,-0.21) {\small{$5$}};
					\node[below] at (6,-0.21) {\small{$k$}};
					\node[below] at (7,-0.21) {\small{$k+1$}};
					\node[below] at (8,-0.21) {\small{$k+2$}};
					\node[below] at (9,-0.21) {\small{$k+3$}};
					\node[below] at (10,-0.21) {\small{$k+4$}};
					\node[below] at (11,-0.21) {\small{$2k-1$}};
					\node[below] at (12,-0.21) {\small{$2k$}};
					\node[below] at (13,-0.21) {\small{$2k+1$}};		
					
				\foreach \label/\loc in {\hat{J}/1.5, \tilde{J}/2.5, J_{r_{2k-2}}/3.5, J_{r_{2k-4}}/4.5, \cdots/5.5, J_{r_{4}}/6.5, J_{r_{2}}/7.5, J_{r_{1}}/8.5, J_{r_{3}}/9.5, \cdots/10.5, J_{r_{2k-5}}/11.5, J_{r_{2k-3}}/12.5}
					{
					\node[above] at (\loc, 0) {$\label$};
					}
				
				\path[->] (8) edge[red, bend left=90] node{} (9);
				\path[->] (9) edge[red, bend left=90] node{} (7);
				\path[->] (7) edge[red, bend left=90] node{} (10);
				\path[->] (10) edge[red, bend left=90] node{} (6);
				
				\path[->] (11) edge[red, bend left=90] node{} (5);
				\path[->] (12) edge[red, bend left=90] node{} (4);
				\path[->] (5) edge[red, bend left=90] node{} (12);
				\path[->] (4) edge[red, bend left=90] node{} (13);
				\path[dashed,->] (3) edge[red, bend left=90] node{} (8);

			\end{tikzpicture}	
			\caption{Fixed ordering when for intervals $J_{r_{1}}$ through $J_{r_{2k-4}}$, when both $\tilde{J}$ and $\hat{J}$ are in position $1$.}
			\label{fig:fixJtildein1}%
	\end{figure}
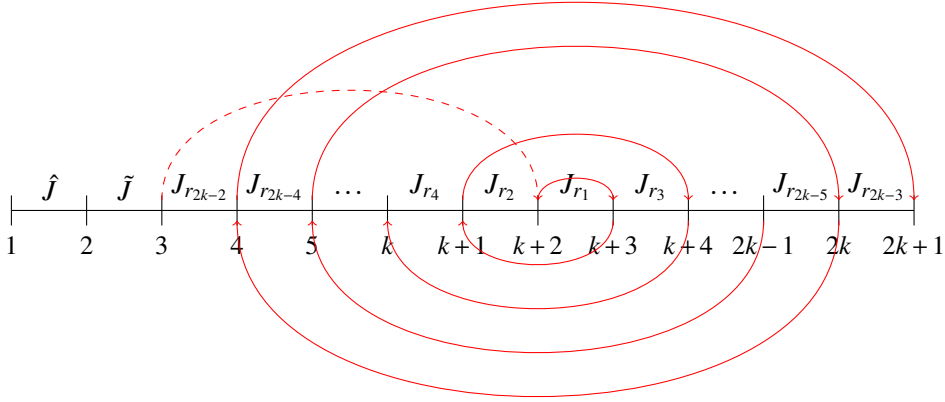
		
		By \eqref{eq:2minrulesForward}, the image of $J_{r_{2k-2}}$ must contain $J_{r_{1}}$, but can not map to a lesser even interval, specifically $J_{r_{2}}$. Therefore, the left endpoint of $J_{r_{2k-2}}$ must map somewhere to the left of $J_{r_{1}}$, but can not map to the left of $J_{r_{2}}$. In other words, it must map to a point separating $J_{r_{1}}$ and $J_{r_{2}}$. The only point that meets this condition is indexed $k+2$. Therefore, the left endpoint of $J_{r_{2k-2}}$ must map to the point indexed $k+2$, shown by the dotted red line in Fig.~\ref{fig:fixJtildein1}.
		
		By \eqref{eq:2minrulesForward}, the image of $J_{r_{2k-3}}$ must contain $J_{r_{2k-2}}$. Thus, the right endpoint of $J_{r_{2k-3}}$ must map to the left of $J_{r_{2k-2}}$. However, note if the right endpoint of $J_{r_{2k-3}}$ maps immediately to the left of $J_{r_{2k-2}}$, to the point indexed $3$, then two closed sub-orbits of length $2$, and of the form $\left\{ 1, 2 \right\}$. Thus, the right endpoint of $J_{r_{2k-3}}$ can map either to the point indexed $1$ or the point indexed $2$. Investigating both of these cases individually leads to quick contradictions.
		
		Suppose that the right endpoint of $J_{r_{2k-3}}$ maps to the left endpoint of $\hat{J}$, or the point indexed $1$. The only two 'open' points, or points that don't already have a point mapping to them, are the ones indexed $2$ and $3$. Because point $2$ can not map to itself, it must map to the only other open point, indexed $3$. The final point, indexed $1$, can now only map to point $2$, completing the cyclic permutation. However, this can instantly be shown to be an invalid cyclic permutation, due to the presence of a $3$ orbit, of the form $\tilde{J} \rightarrow J_{r_{2k-2}} \rightarrow J_{r_{2k-3}} \rightarrow \tilde{J}$.
		
		Since the left endpoint of $J_{r_{2k-3}}$ can not map to the point indexed $1$, the case where the left endpoint of $J_{r_{2k-3}}$ maps to the point indexed $2$, is considered. Now, the only two open points are the ones indexed $1$ and $3$. Because point $1$ can not map to itself, it must map to the only other open point, indexed $3$. The final point, indexed $2$, can now only map to point $1$, completing the cyclic permutation. However, this can instantly be shown to be an invalid cyclic permutation, due to the presence of a $3$ orbit, of the form $\hat{J} \rightarrow \tilde{J} \rightarrow \hat{J} \rightarrow \hat{J}$. 
		
		This exhausts all possible options for points which the right endpoint of $J_{r_{2k-3}}$ can map to, proving that it is impossible to form a valid cyclic permutation, when $\tilde{J}$ and $\hat{J}$ are both in position $1$. Furthermore, all cases where $\tilde{J}$ is in position $1$ and $\hat{J}$ is in an arbitrary position $j$, where $2 \leq j \leq k$ have been proven to lead to contradictions. Thus any case where $\tilde{J}$ is in position $1$ and $\hat{J}$ is inserted arbitrarily to the left of $J_{r_{1}}$, can not lead to the construction of a valid second minimal odd orbit. 
		
		A valid second minimal odd orbit for the case where $\tilde{J}$ is in position $1$, can only be constructed when $\hat{J}$ is inserted arbitrarily to the right of $J_{r_{1}}$. Consider the case when $\hat{J}$ is inserted between $J_{r_{1}}$ and $J_{r_{2k-3}}$. Now assume without loss of generality that $\hat{J}$ is inserted to the position immediately to the left of $J_{r_{2k-3}}$, as the contradiction will be the same. The interval immediately to the right of $\hat{J}$, in this case $J_{r_{2k-3}}$, has trouble mapping to the interval indexed one greater than itself, in this case $J_{r_{2k-2}}$. While it is clear how an endpoint of $J_{r_{2k-3}}$ will map to the left of $J_{r_{2k-2}}$, it is much less clear how an endpoint will map to the right of $J_{r_{2k-3}}$. Again, according to \eqref{eq:2minrulesBackward}, the image of $J_{r_{2k-3}}$ can't contain to $J_{r_{1}}$. Thus, an endpoint of $J_{r_{2k-3}}$ must map to a point separating $J_{r_{2k-2}}$ and $J_{r_{1}}$. The only point that fits this description is the one indexed $k+1$. However, it is important to note that the largest indexed interval, $J_{r_{2k-2}}$, must also map back to $J_{r_{1}}$. And according to \eqref{eq:2minrulesBackward}, $J_{r_{2k-2}}$ can't map to a lesser even interval, specifically including $J_{r_{2}}$. Therefore, an endpoint of $J_{r_{2k-2}}$ must map to the left of $J_{r_{1}}$, but can not map to the left of $J_{r_{2}}$. The only point that fits this description is the one indexed $k+1$, which is already taken. This is an immediate contradiction, as both an endpoint of $J_{r_{2k-3}}$ and an endpoint of $J_{r_{2k-2}}$ must map to the point indexed $k+1$. Therefore, it is impossible for $\hat{J}$ to be placed between $J_{r_{1}}$ and $J_{r_{2k-3}}$. The only possible way to make the construction valid second minimal odd orbits possible when $\tilde{J}$ is placed in position one, is by placing $\hat{J}$ into the position $2k-1$, or immediately to the right of $J_{r_{2k-3}}$.		
		
To complete the proof we show there are no valid settings $(i,j)$ for $k < i \leq 2k-1$, $j \geq i$. Note that, \eqref{eq:possSettings2km1} only includes $(i,j)$ pairs where $j\geq k$, if $i>k$. Thus, both $i$ and $j$ are to the right of $J_{r_{1}}$. Assume for the sake of simplicity, that the interval closer to $J_{r_{1}}$ is the one labeled $\tilde{J}$. All intervals between $J_{r_{1}}$ and $\tilde{J}$ will map according to the minimal structure \ref{thm:stefan} \cite{block92}, as described earlier. The Stefan structure comes to an end when the interval immediately to the left of $\tilde{J}$, $J_{r_{2i-2k-1}}$, maps to $J_{r_{2i-2k}}$. Now, the interval to the right of $\tilde{J}$, $J_{r_{2i-2k+1}}$, has trouble mapping to the interval $J_{r_{2i-2k+2}}$. $J_{r_{2i-2k+1}}$ can not map to $J_{r_{1}}$, according to the rules \eqref{eq:2minrulesForward} and \eqref{eq:2minrulesBackward}, so one of its endpoints must map to a point separating $J_{r_{1}}$ and $J_{r_{2i-2k+2}}$. The only such point is $k$. However, as discussed before, $k$ must be mapped to by an endpoint of the interval $J_{r_{2k-2}}$. Thus there are no open points between $J_{r_{1}}$ and $J_{r_{2i-2k+2}}$. Note that, because $\hat{J}$ is inserted to the right of $\tilde{J}$, it is impossible for an endpoint of $J_{r_{2i-2k+1}}$ to map to the right of $J_{r_{2i-2k+2}}$. Thus it is impossible for $J_{r_{2i-2k+1}}$ to contain $J_{r_{2i-2k+2}}$, when both $\tilde{J}$ and $\hat{J}$ are to the right of $J_{r_{1}}$, which is a clear contradiction.		

%
%
%

		\begin{lem}
		Placing $\tilde{J}$ and $\hat{J}$ in setting $(1,2k-1)$ produces exactly $2$ second minimal orbits. These are given in listings \eqref{eq:case1-2km1-1} and \eqref{eq:case1-2km1-2} and the corresponding digraphs are presented in Fig.~\ref{fig:case1-2km1-1} and Fig.~\ref{fig:case1-2km1-2} respectively.
		
		\small{
		\begin{equation}
			\left(\begin{array}{cccccccccc}   
				1    & 2   & 3  & \cdots & k+1 & k+2 & \cdots & 2k-1 & 2k & 2k+1 \\ 
				2k+1 & k+1 & 2k & \cdots & k+2 & k 	 & \cdots & 3    & 1  & 2 \\
			\end{array} \right)	
			\label{eq:case1-2km1-1}	
		\end{equation}}

		\small{
		\begin{equation}
			\left(\begin{array}{cccccccccc}   
				1  & 2   & 3    & 4 	 & \cdots & k+1 & k+2 & \cdots & 2k & 2k+1 \\ 
				2k & k+1 & 2k+1 & 2k-1 & \cdots & k+2 & k 	 & \cdots & 2  & 1 \\
			\end{array} \right)	
			\label{eq:case1-2km1-2}
		\end{equation}}				
	
		\input{digraphs/digraph1-2km1-1.tex}
		
		\input{digraphs/digraph1-2km1-2.tex}
		
		\label{lem:case1-2km1}
	\end{lem}
	\begin{proof}

		Having inserted $(\tilde{J}, \hat{J})$ in position $(1,2k-1)$ we have the full interval distribution given in Fig.~\ref{fig:set1c2km1}. Then, using the path \eqref{eq:secondminfc_index} and the rules in \eqref{eq:2minrulesForward}, and \eqref{eq:2minrulesBackward} we observe that the images of the elements of the cycle from $4$ to $2k-1$ are uniquely defined following Stefan structure as it is demonstrated in \eqref{eq:case1-2km1-1-4-2k-1}.	 
		\small{
		\begin{equation}
			\left(\begin{array}{cccccccccc}   
				4  & \cdots & k & k+1 & k+2 & \cdots & 2k-1 \\ 
				2k-1 & \cdots & k+3 & k+2 & k 	 & \cdots & 3  \\
			\end{array} \right)	
			\label{eq:case1-2km1-1-4-2k-1}	
		\end{equation}}
The alteration appears only in images of the elements $1,3,2k,2k+1$. By using the path \eqref{eq:secondminfc_index} and the rules in \eqref{eq:2minrulesForward}, and \eqref{eq:2minrulesBackward} again, we construct the potential cyclic permutation \eqref{eq:cycperm2km1} and analyze which of the available choices lead to valid second minimal odd orbits.
		
	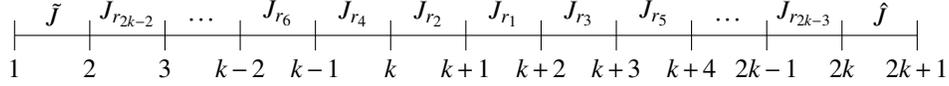
\begin{figure}[htb]
		\begin{tikzpicture}
			\draw (1,0)--(13.0,0);
			\foreach \num/\idx in {1,2,...,13}
				{
				\draw (\num,0.2)--(\num,-0.2); 
				}
			\foreach \label/\loc in {\tilde{J}/1.5, J_{r_{2k-2}}/2.5, \cdots/3.5, J_{r_{6}}/4.5, J_{r_{4}}/5.5, J_{r_{2}}/6.5, J_{r_{1}}/7.5, J_{r_{3}}/8.5, J_{r_{5}}/9.5, \cdots/10.5, J_{r_{2k-3}}/11.5, \hat{J}/12.5}
				{
				\node[above] at (\loc, 0) {$\label$};
				}
			\foreach \label/\loc in {1/1, 2/2, 3/3, k-2/4, k-1/5, k/6, k+1/7, k+2/8, k+3/9, k+4/10, 2k-1/11, 2k/12, 2k+1/13 }
				{
				\node[below] at (\loc, -0.2) {\small{$\label$}};
				}		
		\end{tikzpicture}
		\caption{Complete interval for case when length is $2k-1$ with $\left | B^{-} \right | > \left | B^{+} \right |$ and missing intervals are at setting $(1,2k-1)$}
		\label{fig:set1c2km1}
	\end{figure}

	\small{
	\begin{equation}
		\left(\begin{array}{ccccccccc}   
			1 & 2 & 3 & \cdots & k+1 & \cdots & 2k-1 & 2k & 2k+1 \\ 
			\begin{matrix} 2k+1\\ 2k \\ 2 \end{matrix} & k+1 & \begin{matrix} 2k\\ 2k+1 \end{matrix} & \cdots & k+2 & \cdots & 3  & \begin{matrix} 1\\ 2 \end{matrix} & \begin{matrix} 2\\ 1 \\ 2k \end{matrix} \\
		\end{array} \right)
		\label{eq:cycperm2km1}
	\end{equation}}

	\begin{enumerate}
		\item Case $(1)$: Choosing $f(2k+1)=2 \Rightarrow f(2k)=1, f(1)=2k+1, f(3)=2k$; This leads to a valid second minimal orbit with the topological structure min-max-min, the associated digraph is presented in Fig.~\ref{fig:case1-2km1-1} and the cyclic permutation is listed in \eqref{eq:case1-2km1-1}. Next we analyze the digraph to show that there are no primitive cycles of even length $\leq 2k-2$, which would imply by Straffin's lemma an existence of odd periodic orbit of length $\leq 2k-3$. From Lemms~\ref{thm:convStraffin} it then follows that the the $P$-linearization of the orbit \eqref{eq:case1-2km1-1} presents an example of continuous map with second minimal $(2k+1)$-orbit.

		\begin{enumerate}
			\item Consider primitive cycles that contain $J_{1}$. Without loss of generality choose $J_1$ as the starting vertex. First assume that cycle doesn't contain $J_{k+1}$. Since  $J_{2k+1-i}{\color{Red}\rightarrow}J_{i},\ i=1,...,k-1; J_{2k-1} \rightarrow J_1$ any such cycle can be formed only by adding to starting vertex $J_1$ pairs $(J_{2k-1}, J_1)$, $( J_{2k+1-i}, J_{i}), i=1,...,k-1$. Therefore, the length of the cycle (by counting $J_1$ twice) will be always an odd number. On the contrary, if cycle contains $J_{k+1}$, then to close it at $J_1$ the smallest required even length is $2k+2$.
			
			\item Consider primitive cycles that doesn't contain $J_{1}$, but contain $J_2$. Without loss of generality choose $J_2$ as the starting vertex. First assume that cycle doesn't contain $J_{k+1}$. Since  $J_{2k+1-i}{\color{Red}\rightarrow}J_{i},\ i=2,...,k-1,$ any such cycle can be formed only by adding to starting vertex $J_2$ pairs $( J_{2k+1-i}, J_{i}), i=2,...,k-1$. Therefore, the length of the cycle (by counting $J_2$ twice) will be always an odd number. On the contrary, if cycle contains $J_{k+1}$, then to close it at $J_2$ the smallest required even length is $2k$. Finally, it is easy to see that by excluding $J_1$ and $J_2$ from the cycle, due to red edges all the vertices but $J_{k+1}$ must be also excluded, and cycle at $J_{k+1}$ is the only possibility. 

		\end{enumerate}

		\item Case $(2)$: $f(2k+1)=1 \Rightarrow f(2k)=1, f(1)=2k+1, f(3)=2k$; This leads to a valid second minimal orbit with the topological structure min-max, the associated digraph is presented in Fig.~\ref{fig:case1-2km1-2} and the cyclic permutation is listed in \eqref{eq:case1-2km1-2}. Next we prove as in previous case that there are no primitive cycles of even length $\leq 2k-2$, and therefore according to Lemms~\ref{thm:convStraffin} $P$-linearization of the orbit \eqref{eq:case1-2km1-1} presents an example of continuous map with second minimal $(2k+1)$-orbit.

		\begin{enumerate}
			\item Consider primitive cycles that contain $J_{1}$. Without loss of generality choose $J_1$ as the starting vertex. First assume that cycle doesn't contain $J_{k+1}$. Since  $J_{2k+1-i}{\color{Red}\rightarrow}J_{i},\ i=1,...,k-1,$ any such cycle can be formed only by adding to starting vertex $J_1$ pairs $( J_{2k+1-i}, J_{i}), i=1,...,k-1$. Therefore, the length of the cycle (by counting $J_1$ twice) will be always an odd number. On the contrary, if cycle contains $J_{k+1}$, then to close it at $J_1$ the smallest required even length is $2k+2$.
			
			\item Consider primitive cycles that doesn't contain $J_{1}$, but contain $J_2$. Without loss of generality choose $J_2$ as the starting vertex. First assume that cycle doesn't contain $J_{k+1}$. Since  $J_{2k+1-i}{\color{Red}\rightarrow}J_{i},\ i=2,...,k-1,$ any such cycle can be formed only by adding to starting vertex $J_2$ pairs $( J_{2k+1-i}, J_{i}), i=2,...,k-1$. Therefore, the length of the cycle (by counting $J_2$ twice) will be always an odd number. On the contrary, if cycle contains $J_{k+1}$, then to close it at $J_2$ the smallest required even length is $2k$. Finally, it is easy to see that by excluding $J_1$ and $J_2$ from the cycle, due to red edges all the vertices but $J_{k+1}$ must be also excluded, and cycle at $J_{k+1}$ is the only possibility. 

		\end{enumerate}

		\item Case $(3)$: $f(2k+1)=2k\Rightarrow f(3)=2k+1, f(1)=2, f(2k) = 1$. Produced cyclic permutation contains the subgraph $J_{2k}\rightarrow J_{1}\rightarrow J_{3}\rightarrow J_{2k}$. According to Straffin's lemma this subgraph implies the existence of period 3-orbit, which is a contardiction. 
	\end{enumerate}		
\end{proof}

\begin{lem}
	Placing $\tilde{J}$ and $\hat{J}$ in setting $(2,2k-1)$ produces exactly $3$ second minimal orbits. These are given in listings \eqref{eq:case1-2km1-1}, \eqref{eq:case2-2km1-2}, and \eqref{eq:case2-2km1-3} and the corresponding digraphs are presented in Fig.~\ref{fig:case1-2km1-1}, Fig.~\ref{fig:case2-2km1-2}, and Fig.~\ref{fig:case2-2km1-3} respectively.

	\small{
	\begin{equation}
		\left(\begin{array}{ccccccccccc}   
			1    & 2   & 3    & 4 	 & \cdots & k+1 & k+2 & \cdots & 2k-1 & 2k & 2k+1 \\ 
			k+1  & 2k  & 2k+1 & 2k-1 & \cdots & k+2 & k 	& \cdots & 3    & 1  & 2 \\
		\end{array} \right)	
		\label{eq:case2-2km1-2}	
	\end{equation}}		
	
	\small{
	\begin{equation}
		\left(\begin{array}{ccccccccccc}   
			1    & 2    & 3  & \cdots & k+1 & k+2 & \cdots & 2k-2 & 2k-1 & 2k & 2k+1 \\ 
			k+1  & 2k+1 & 2k & \cdots & k+2 & k   & \cdots & 4    & 2    & 1  & 3 \\
		\end{array} \right)	
		\label{eq:case2-2km1-3}	
	\end{equation}}		
	
	\input{digraphs/digraph2-2km1-2.tex}
	
	\input{digraphs/digraph2-2km1-3.tex}	
	
	\label{lem:case2-2km1}
\end{lem}
\begin{proof}
Having inserted $(\tilde{J}, \hat{J})$ in position $(2,2k-1)$ and by using the path \eqref{eq:secondminfc_index} and the rules in \eqref{eq:2minrulesForward}, and \eqref{eq:2minrulesBackward} we observe that the images of the elements of the cycle from $4$ to $2k-2$ are uniquely defined following Stefan structure as it is demonstrated in \eqref{eq:case1-2km1-2-4-2k-1}.	 
		\small{
		\begin{equation}
			\left(\begin{array}{cccccccccc}   
				4  & \cdots & k & k+1 & k+2 & \cdots & 2k-2 \\ 
				2k-1 & \cdots & k+3 & k+2 & k 	 & \cdots & 4  \\
			\end{array} \right)	
			\label{eq:case1-2km1-2-4-2k-1}	
		\end{equation}}
The alteration appears only in images of the elements $1,2,3,2k-1,2k+1$. By using the path \eqref{eq:secondminfc_index} and the rules in \eqref{eq:2minrulesForward}, and \eqref{eq:2minrulesBackward} again, we construct the potential cyclic permutation \eqref{eq:case2-2km1} and analyze which of the available choices lead to valid second minimal odd orbits.	
	\small{
	\begin{equation}
		\left(\begin{array}{ccccccccc}   
			1   & 2    & 3  & \cdots & k+1 & \cdots & 2k-1 & 2k & 2k+1 \\ 
			\langle k+1, & \begin{matrix} 2k+1\\ 2k \end{matrix}\rangle & \begin{matrix} 2k\\ 2k+1 \end{matrix} & \cdots & k+2 & \cdots & \begin{matrix} 2\\ 3 \end{matrix} & 1  & \begin{matrix} 3\\ 2 \end{matrix} \\
		\end{array} \right)	
		\label{eq:case2-2km1}	
	\end{equation}}	
	
	\begin{enumerate}
		\item If $f(1)=2k \Rightarrow 2$-suborbit $\left \{ 1, 2k \right \}$, a contradiction.
		\item If $f(1)=2k+1 \Rightarrow f(2)=k+1, f(3)=2k$
		\begin{enumerate}
			\item If $f(2k-1)=2 \Rightarrow f(2k+1)=3 \Rightarrow 4$-suborbit $\left \{ 1,3, 2k, 2k+1 \right \}$, a contradiction.
			\item If $f(2k-1)=3 \Rightarrow f(2k+1)=2$, by we have a second minimal $(2k+1)$ orbit given in \eqref{eq:case1-2km1-1} with topological structure min-max-min, observe that this is the same as \eqref{eq:case1-2km1-1} from setting $(1,2k-1)$, and so the settings share a cyclic permutation. This is expected since to move from setting $(1, 2k-1)$ to $(2,2k-1)$, only the location of $\tilde{J}$ is changed and so, in this particular case, the digraph remains unchanged as we simply swap the intervals $\tilde{J}$ and $J_{r_{2k-2}}$.
			\end{enumerate}
		\item $f(1)=k+1, f(2)=2k \Rightarrow f(3)=2k+1, f(2k+1)=2, f(2k-1)=3.$ This leads to a valid second minimal orbit with the topological structure max-min, the associated digraph is presented in \ref{fig:case2-2km1-2} and the cyclic permutation is listed in \eqref{eq:case2-2km1-2}. Next we prove that there are no primitive cycles of even length $\leq 2k-2$, and therefore according to Lemma~\ref{thm:convStraffin} $P$-linearization of the orbit \eqref{eq:case2-2km1-2} presents an example of continuous map with second minimal $(2k+1)$-orbit.
		
		\begin{enumerate}
			\item Consider primitive cycles that contain $J_{1}$. Without loss of generality choose $J_1$ as the starting vertex. First assume that cycle doesn't contain $J_{k+1}$. Since  $J_{2k+1-i}{\color{Red}\rightarrow}J_{i},\ i=1,...,k-1; i\neq 2;$ and $J_{2k-1} \rightarrow J_1, J_{2k-1} \rightarrow J_2$  any such cycle can be formed only by adding to starting vertex $J_1$ pairs $(J_{2k-1},J_2)$, $( J_{2k+1-i}, J_{i}), i=1,...,k-1$. Therefore, the length of the cycle (by counting $J_1$ twice) will be always an odd number. On the contrary, if cycle contains $J_{k+1}$, then to close it at $J_1$ the smallest required even length is $2k$.
			
			\item Consider primitive cycles that doesn't contain $J_{1}$. It is easy to see that by excluding $J_1$ from the cycle, due to red edges all the vertices but $J_{k+1}$ must be also excluded, and cycle at $J_{k+1}$ is the only possibility. 

		\end{enumerate}		
		\item $f(1)=k+1, f(2)=2k+1 \Rightarrow f(3)=2k, f(2k+1)=3, f(2k-1)=2$. This produces a second minimal $(2k+1)$ orbit with topological structure max-min, the associated digraph is presented in \ref{fig:case2-2km1-3} and the cyclic permutation is listed in \eqref{eq:case2-2km1-3}. As in previous cases we prove that there are no primitive cycles of even length $\leq 2k-2$, and therefore according to Lemma~\ref{thm:convStraffin} $P$-linearization of the orbit \eqref{eq:case2-2km1-3} presents an example of continuous map with second minimal $(2k+1)$-orbit.

			\begin{enumerate}
			\item Consider primitive cycles that contain $J_{1}$. Without loss of generality $J_1$ as the starting vertex. First assume that cycle doesn't contain $J_{k+1}$. Since  $J_{2k+1-i}{\color{Red}\rightarrow}J_{i},\ i=4,...,k-1$ (if $k\geq 5$) and $J_{2k-2} \rightarrow J_3$, $J_{2k-2} \rightarrow J_2$, $J_{2k-1} {\color{Red}\rightarrow}J_1$, $J_{2k} \rightarrow J_1$  any such cycle can be formed only by adding to starting vertex $J_1$ pairs $(J_{2k-1},J_1)$, $(J_{2k}, J_2)$, $(J_{2k-2}, J_2)$, $(J_{2k+1-i}, J_{i}), i=1,...,k-1; i \neq 2;$ Therefore, the length of the cycle (by counting $J_1$ twice) will be always an odd number. On the contrary, if cycle contains $J_{k+1}$, then to close it at $J_1$ the smallest required even length is $2k$.
			
			\item Consider primitive cycles that doesn't contain $J_{1}$. It is easy to see that by excluding $J_1$ from the cycle, due to red edges all the vertices but $J_{k+1}, J_{2k}, J_2$ must be also excluded, and cycle at $J_{k+1}$ and cycle formed by $J_2$ and $J_{2k}$ are only possibilities. 		
			
		\end{enumerate}
	\end{enumerate}
\end{proof}

\begin{lem}
	Placing $\tilde{J}$ and $\hat{J}$ in setting $(2,2k-2)$ produces exactly $4$ second minimal orbits. These are listed in cyclic permutations \eqref{eq:case1-2km1-2}, \eqref{eq:case2-2km1-2}, \eqref{eq:case2-2km2-3}, and \eqref{eq:case2-2km2-4} and the corresponding digraphs are presented in Fig.~\ref{fig:case1-2km1-2}, Fig.~\ref{fig:case2-2km1-2}, Fig.~\ref{fig:case2-2km2-3}, and Fig.~\ref{fig:case2-2km2-4} respectively.

	\small{
	\begin{equation}
		\left(\begin{array}{cccccccccccc}   
			1   & 2  & 3    & 4    & \cdots & k+1 & k+2 & \cdots & 2k-2 & 2k-1 & 2k & 2k+1 \\ 
			k+1 & 2k & 2k+1 & 2k-1 & \cdots & k+2 & k   & \cdots & 4    & 2    & 3  & 1 \\
		\end{array} \right)	
		\label{eq:case2-2km2-3}
	\end{equation}}			
	
	\small{
	\begin{equation}
		\left(\begin{array}{cccccccccccc}   
			1   & 2    & 3    & 4  & 5    & \cdots & k+1 & k+2 & \cdots & 2k-1 & 2k & 2k+1 \\ 
			k+1 & 2k-1 & 2k+1 & 2k & 2k-2 & \cdots & k+2 & k   & \cdots & 3    & 2  & 1 \\
		\end{array} \right)	
		\label{eq:case2-2km2-4}
	\end{equation}}		

  \input{digraphs/digraph2-2km2-3.tex}
	
	\input{digraphs/digraph2-2km2-4.tex}
	
	\label{lem:case2-2km2}
\end{lem}
\begin{proof}
Having inserted $(\tilde{J}, \hat{J})$ in position $(2,2k-2)$ and by using the path \eqref{eq:secondminfc_index} and the rules in \eqref{eq:2minrulesForward}, and \eqref{eq:2minrulesBackward} we observe that the images of the elements of the cycle from $5$ to $2k-2$ are uniquely defined following Stefan structure as it is demonstrated in \eqref{eq:case1-2km1-2-5-2k-2}.	 
		\small{
		\begin{equation}
			\left(\begin{array}{cccccccccc}   
				5  & \cdots & k & k+1 & k+2 & \cdots & 2k-2 \\ 
				2k-2 & \cdots & k+3 & k+2 & k 	 & \cdots & 4  \\
			\end{array} \right)	
			\label{eq:case1-2km1-2-5-2k-2}	
		\end{equation}}
The alteration appears only in images of the elements $1,2,4,2k-1,2k, 2k+1$. By using the path \eqref{eq:secondminfc_index} and the rules in \eqref{eq:2minrulesForward}, and \eqref{eq:2minrulesBackward} again, we construct the potential cyclic permutation \eqref{eq:cycpermi2kmi2}and analyze which of the available choices lead to valid second minimal odd orbits.	
	\small{
	\begin{equation}
		\left(\begin{array}{cccccccc}   
			1   & 2    & 3  & 4 & \cdots & 2k-1 & 2k & 2k+1 \\ 
			\langle k+1, & \begin{matrix} 2k-1\\ 2k \end{matrix}\rangle & 2k+1 & \begin{matrix} 2k\\ 2k-1 \end{matrix} & \cdots & \begin{matrix} 2\\ 3 \end{matrix} & \langle 1,  & \begin{matrix} 3\\ 2 \end{matrix} \rangle\\
		\end{array} \right)	
		\label{eq:cycpermi2kmi2}
	\end{equation}}	
	
	\begin{enumerate}
		\item If $f(1)=2k \Rightarrow f(2)=k+1, f(4)=2k-1$
		\begin{enumerate}
			\item If $f(2k)=1 \Rightarrow 2$-suborbit $\left \{ 1, 2k \right \}$, a contradiction.
			\item If $f(2k)=2$, we have a second minimal $(2k+1)$ orbit given in \eqref{eq:case1-2km1-2} with topological structure min-max, shared with setting $(1,2k-1)$. 
			\item If $f(2k)=3\Rightarrow 4$-suborbit $\left \{ 1,3, 2k, 2k+1 \right \}$, a contradiction.
		\end{enumerate}
		\item If $f(1)=2k-1\Rightarrow f(2)=k+1, f(4)=2k$
		\begin{enumerate}
			\item If $f(2k)=1$ and $f(2k-1)=2\Rightarrow f(2k+1)=3\Rightarrow 2$-suborbit $\left \{ 3, 2k+1 \right \}$, a contradiction.
			\item If $f(2k)=1$ and $f(2k-1)=3\Rightarrow f(2k+1)=2$, then for $k>3$ we have the primitive subgraph
					$$J_{r_{1}}\rightarrow J_{r_{1}}\rightarrow\dots\rightarrow J_{r_{2k-6}}\rightarrow \hat{J}\rightarrow J_{r_{2k-2}}\rightarrow J_{r_{1}}$$
					Lemma~\ref{thm:straffin} implies the existence of $2k-3$-periodic orbit, which is a contradiction. For $k=3$ we have the subgraph
					$$J_{r_{1}}\rightarrow \hat{J}\rightarrow J_{r_{4}}\rightarrow J_{r_{1}}$$
					which leads to a $3$-orbit, a contradiction.
					
			\item If $f(2k)=2\Rightarrow f(2k-1)=3, f(2k+1)=1\Rightarrow 4$-suborbit $\left \{ 1,3, 2k-1, 2k+1 \right \}$, a contradiction.
			\item If $f(2k)=3\Rightarrow f(2k-1)=2, f(2k+1)=1$, then for $k>3$ we have the subgraph
					$$J_{r_1}\rightarrow J_{r_{1}}\rightarrow\dots\rightarrow J_{r_{2k-6}}\rightarrow \hat{J}\rightarrow \tilde{J}\rightarrow J_{r_{1}}$$
					Lemma~\ref{thm:straffin} implies the existence of $2k-3$-periodic orbit, which is a contradiction. For $k=3$ we have the subgraph

					$$J_{r_{1}}\rightarrow \hat{J}\rightarrow \tilde{J}\rightarrow J_{r_{1}}$$
					which leads to a $3$-orbit, a contradiction.
		\end{enumerate}
		\item If $f(2)=2k\Rightarrow f(1)=k+1, f(4)=2k-1$
		\begin{enumerate}
			\item If $f(2k)=1$ and $f(2k-1)=2\Rightarrow f(2k+1)=3 \Rightarrow 2$-suborbit $\left \{ 3, 2k+1 \right \}$, a contradiction.
			\item If $f(2k)=1$ and $f(2k-1)=3\Rightarrow f(2k+1)=2$, we have a second minimal $(2k+1)$ orbit given in \eqref{eq:case2-2km1-2} with topological structure max-min, shared with setting $(2,2k-1)$. 
			\item If $f(2k)=2\Rightarrow 2$-suborbit $\left \{ 2, 2k \right \}$, a contradiction.
			\item If $f(2k)=3\Rightarrow f(2k-1)=2, f(2k+1)=1$, we have a second minimal $(2k+1)$ orbit given in \eqref{eq:case2-2km2-3} with topological structure max-min-max, and the associated digraph is presented in Fig.~\ref{fig:case2-2km2-3}. Next we prove as in previous lemma that there are no primitive cycles of even length $\leq 2k-2$, and therefore according to Lemma~\ref{thm:convStraffin} $P$-linearization of the orbit \eqref{eq:case2-2km2-3} presents an example of continuous map with second minimal $(2k+1)$-orbit.
		
		\begin{enumerate}
			\item Consider primitive cycles that contain $J_{1}$. Without loss of generality $J_1$ as the starting vertex. First assume that cycle doesn't contain $J_{k+1}$. Since  $J_{2k+1-i}{\color{Red}\rightarrow}J_{i},\ i=2,...,k-1; i\neq 3;$ and $J_{2k} \rightarrow J_1, J_{2k-2} \rightarrow J_3$,  $J_{2k-2} \rightarrow J_2$ any such cycle can be formed only by adding to starting vertex $J_1$ pairs $(J_{2k-2},J_2)$, $( J_{2k+1-i}, J_{i}), i=1,...,k-1$. Therefore, the length of the cycle (by counting $J_1$ twice) will be always an odd number. On the contrary, if cycle contains $J_{k+1}$, then to close it at $J_1$ the smallest required even length is $2k$.
			
		\item Consider primitive cycles that doesn't contain $J_{1}$. It is easy to see that by excluding $J_1$ from the cycle, due to red edges all the vertices but $J_{k+1}, J_2, J_{2k}$ must be also excluded, and cycle at $J_{k+1}$ and cycle formed by $J_2$ and $J_{2k}$ are the only possibilities. 			
		\end{enumerate}
		\end{enumerate}
		\item If $f(2)=2k-1\Rightarrow f(1)=k+1, f(4)=2k$
		\begin{enumerate}
			\item If $f(2k)=1$ and $f(2k-1)=2 \Rightarrow 2$-suborbit $\left \{ 2, 2k-1 \right \}$, a contradiction.
			\item If $f(2k)=1$ and $f(2k-1)=3 \Rightarrow 4$-suborbit $\left \{ 2, 3, 2k-1, 2k+1 \right \}$, a contradiction.
			\item If $f(2k)=2\Rightarrow f(2k-1)=3, f(2k+1)=1$, we have a second minimal $(2k+1)$ orbit given in \eqref{eq:case2-2km2-4} with topological structure max-min-max, and the associated digraph is presented in Fig.~\ref{fig:case2-2km2-4}. Next we prove as in previous cases that there are no primitive cycles of even length $\leq 2k-2$, and therefore according to Lemma~\ref{thm:convStraffin} $P$-linearization of the orbit \eqref{eq:case2-2km2-4} presents an example of continuous map with second minimal $(2k+1)$-orbit.
		
		\begin{enumerate}
			\item Consider primitive cycles that contain $J_{1}$. Without loss of generality choose $J_1$ as the starting vertex. First assume that cycle doesn't contain $J_{k+1}$. Since  $J_{2k+1-i}{\color{Red}\rightarrow}J_{i},\ i=1,...,k-1;$ any such cycle can be formed only by adding to starting vertex $J_1$ pairs $( J_{2k+1-i}, J_{i}), i=1,...,k-1$. Therefore, the length of the cycle (by counting $J_1$ twice) will be always an odd number. On the contrary, if cycle contains $J_{k+1}$, then to close it at $J_1$ the smallest required even length is $2k$.
			
		\item Consider primitive cycles that doesn't contain $J_{1}$. It is easy to see that by excluding $J_1$ from the cycle, due to red edges all the vertices but $J_{k+1}, J_2, J_{2k-1}$ must be also excluded, and cycle at $J_{k+1}$ and cycle formed by $J_2$ and $J_{2k-1}$ are the only possibilities. 			
		\end{enumerate}

			\item If $f(2k)=3\Rightarrow f(2k-1)=2\Rightarrow 2$-suborbit $\left \{ 2, 2k-1 \right \}$, a contradiction.
		\end{enumerate}
	\end{enumerate}
\end{proof}
		

	
		\begin{lem}
		Each setting $(i,j)$ with $2< i < k$ and $j = 2k-i, k>3$ produces exactly $4$ second minimal cycles listed in cyclic permutations \eqref{eq:validij1}, \eqref{eq:validij2}, \eqref{eq:validij3}, and \eqref{eq:validij4}. If $i=3$, \eqref{eq:validij1} repeats the cyclic permutation \eqref{eq:case2-2km2-4} revealed in Lemma~\ref{lem:case2-2km2}. When $i=k-1$, the cyclic permutation \eqref{eq:validij4} and \eqref{eq:len2kcase11cyc} from Lemma~\ref{lem:m2km1} are identical.
 The corresponding digraphs are presented in Figures~\ref{fig:validij1}, \ref{fig:validij2}, \ref{fig:validij3}, and \ref{fig:validij4} respectively, when $3<i<k-1$.
		
		\small{
		\begin{equation}
			\left(\begin{array}{cccccccccccc}   
				1   & \cdots & i-1 & i   & i+1 & i+2 & \cdots & j+1 & j+2 & j+3 & \cdots & 2k+1 \\ 
				k+1 & \cdots & j+2 & j+4 & j+3 & j+1 & \cdots & i+1 & i   & i-1 & \cdots & 1 \\
			\end{array} \right)
			\label{eq:validij1}
		\end{equation}}		
			
		\small{
		\begin{equation}
			\left(\begin{array}{cccccccccccc}   
				1   & \cdots & i-1 & i   & i+1 & i+2 & \cdots & j+1 & j+2 & j+3 & \cdots & 2k+1  \\ 
				k+1 & \cdots & j+4 & j+2 & j+3 & j+1 & \cdots & i   & i+1 & i-1 & \cdots & 1 \\
			\end{array} \right)
			\label{eq:validij2}
		\end{equation}}				
			
		\small{
		\begin{equation}
			\left(\begin{array}{cccccccccccc}   
				1   & \cdots & i-1 & i   & i+1 & i+2 & \cdots & j+1 & j+2 & j+3 & \cdots & 2k+1  \\ 
				k+1 & \cdots & j+4 & j+2 & j+3 & j+1 & \cdots & i+1 & i-1 & i   & \cdots & 1 \\
			\end{array} \right)
			\label{eq:validij3}
		\end{equation}}				
		
		\small{
		\begin{equation}
			\left(\begin{array}{cccccccccccc}   
				1   & \cdots & i-1 & i   & i+1 & i+2 & \cdots & j+1 & j+2 & j+3 & \cdots & 2k+1  \\ 
				k+1 & \cdots & j+4 & j+1 & j+3 & j+2 & \cdots & i+1 & i   & i-1 & \cdots & 1 \\
			\end{array} \right)
			\label{eq:validij4}
		\end{equation}}	
		
		\input{digraphs/digraph-ij1.tex}
		
		\input{digraphs/digraph-ij2.tex}		
		
		\input{digraphs/digraph-ij3.tex}			
		
		\input{digraphs/digraph-ij4.tex}						
				
		\label{lem:settingij}
	\end{lem}
	\begin{proof}
		We prove this by doing a case by case analysis of the general cyclic permutation listed in \eqref{eq:cycpermi2kmi1}.
		\small{
		\begin{equation}
			\left(\begin{array}{cccccccccccc}   
				\cdots & i-1 & i & i+1 & i+2 & \cdots & j+1 & j+2 & j+3 & \cdots \\ 
				\cdots & < \begin{matrix} j+2\\ j+1 \end{matrix}, & j+4 > & j+3 & \begin{matrix} j+1\\ j+2 \end{matrix} & \cdots & \begin{matrix} i+1\\ i \end{matrix} & < \begin{matrix} i\\ i+1 \end{matrix}, & i-1 > & \cdots \\
			\end{array} \right)
			\label{eq:cycpermi2kmi1}
		\end{equation}}	
					
		\begin{enumerate}
			\item If $f(i-1)=j+2\Rightarrow f(i) = j+4, f(i+2)=j+1$
			\begin{enumerate}
				\item If $f(j+2)=i-1\Rightarrow 2$-suborbit $\left \{ i-1, j+2 \right \}$, a contradiction.
				\item If $f(j+2)=i+1\Rightarrow 4$-suborbit $\left \{ i-1, i+1, j+2, j+3 \right \}$, a contradiction.
				\item If $f(j+2)=i$, we have a second minimal $(2k+1)$ orbit given in \eqref{eq:validij1} with topological structure max-min-max, provided $i>3$. If $i=3$ then we have a second minimal $2k+1$-orbit \eqref{eq:case2-2km2-4} with topological structure {\it max} revealed in Lemma~\ref{lem:case2-2km2}. 
			\end{enumerate}
			\item If $f(i-1)=j+1\Rightarrow f(i) = j+4, f(i+2)=j+2$
			\begin{enumerate}
				\item If $f(j+1)=i+1\Rightarrow 4$-suborbit $\left \{ i-1, i+1, j+1, j+3 \right \}$, a contradiction.
				\item If $f(j+1)=i\Rightarrow f(j+3)\neq i+1$ or we have the closed $2$-suborbit $\left \{ i+1, j+3 \right \}$. So we must have $f(j+3)=i-1\Rightarrow f(j+2)=i+1$. Following the proof of the Lemma~\ref{lem:JhatJtilde} it follows that for $2<i<k-1$ the digraph of the cyclic permutation contains a primitive subgraph 
				
				$$J_{r_{1}}\rightarrow J_{r_{2}}\rightarrow\dots\rightarrow J_{r_{w-4}}\rightarrow \hat{J}\rightarrow \tilde{J}\rightarrow J_{r_{w+1}}\rightarrow  \dots \rightarrow J_{r_{2k-2}}\rightarrow J_{r_{1}}\rightarrow J_{r_{1}}$$
				
				and for $i=k-1$ the digraph of the cyclic permutation contains a primitive subgraph
				
				$$J_{r_{1}}\rightarrow \hat{J}\rightarrow \tilde{J}\rightarrow J_{r_{5}}\rightarrow \dots \rightarrow J_{r_{2k-2}}\rightarrow J_{r_{1}}$$
				
				both of which have length $2k-2$. By Lemma~\ref{thm:straffin}, a periodic orbit of period $2k-3$ must exist, which is a contradiction.
				
				\item If $f(j+1)=i+1\Rightarrow f(j+3)\neq i-1$or there is a period $4$-suborbit $i-1, i+1$, $j+1, j+3$. Thus, $f(j+3)=i\Rightarrow f(j+2)=i-1$. By repeating the argument of the previous case we prove the existence of the $2k-3$-orbit, which is a contradiction. 
				
			\end{enumerate}
			\item If $f(i)=j+2\Rightarrow f(i-1)=j+4, f(i+2)=j+1$
			\begin{enumerate}
				\item If $f(j+2)=i \Rightarrow$ closed $2$-suborbit $\left \{ i, j+2 \right \}$, a contradiction.
				\item If $f(j+2)=i+1\Rightarrow f(j+1)=i, f(j+3)=i-1$, we have a second minimal $(2k+1)$ orbit given in \eqref{eq:validij2} with topological structure max-min-max-min-max.
				\item If $f(j+2)=i-1$ and $f(j+3)=i+1\Rightarrow$ closed $2$-suborbit $\left \{ i+1, j+3 \right \}$, a contradiction. 
				\item If $f(j+2)=i-1, f(j+3)=i\Rightarrow f(j+1)=i+1$, we have a second minimal $(2k+1)$ orbit given in \eqref{eq:validij3} with topological structure max-min-max-min-max. 
			\end{enumerate}
			\item If $f(i)=j+1\Rightarrow f(i-1)=j+4, f(i+2)=j+2$
			\begin{enumerate}
				\item If $f(j+1)=i\Rightarrow 2$-suborbit $\left \{ i, j+1 \right \}$, a contradiction.
				\item If $f(j+1)=i+1$ and $f(j+3)=i\Rightarrow 4$-suborbit $\left \{ i, i+1, j+1, j+3 \right \}$, a contradiction.
				\item If $f(j+1)=i+1$ and $f(j+3)=i-1$, we have a second minimal $(2k+1)$ orbit given in \eqref{eq:validij4} with topological structure max-min-max, unless $i=3$, in which case the topological structure is single max. When $i=k-1$, the cyclic permutation \eqref{eq:validij4} repeats the cyclic permutation \eqref{eq:len2kcase11cyc} revealed in  Lemma~\ref{lem:m2km1}.
			\end{enumerate}
		\end{enumerate}
		
Observe, as $i$ varies between $3$ and $k-1$, the structure of the digraphs associated with the cyclic permutations changes. In particular, for a given cyclic permutation, varying $i$ from $3$ to $k-1$ shifts the region of variations from the right to left ends of the digraph. We demonstrate this in Figures~\ref{fig:ij1} through \ref{fig:ij4}. Note that in these subgraphs, with the exception of Fig.~\ref{fig:ij1a} where $J_{1}\not\rightarrow J_{2k-1}, J_{2k}$, we have $J_{1}\rightarrow J_{k+1},\dots,J_{2k}$.

\begin{figure}[htb]
	\centering		
	\subcaptionbox{$i=3$\label{fig:ij1a}}{\input{digraphs/subdigraph-ij-1_1.tex}} 					
	\subcaptionbox{$3 < i < k-1$}{\input{digraphs/subdigraph-ij-1_2.tex}}	
	\subcaptionbox{$i = k-1$}{\input{digraphs/subdigraph-ij-1_3.tex}}
	\caption{Portion with variations in digraphs of cyclic permutation \ref{eq:validij1}, $(\tilde{J},\hat{J})$ in setting $(i,j), 3 \leq i \leq k-1$.}
	\label{fig:ij1}
\end{figure}

\begin{figure}[htb]
	\centering		
	\subcaptionbox{$i=3$}{\input{digraphs/subdigraph-ij-2_1.tex}} 					
	\subcaptionbox{$3 < i < k-1$}{\input{digraphs/subdigraph-ij-2_2.tex}}	
	\subcaptionbox{$i = k-1$\label{fig:ij2c}}{\input{digraphs/subdigraph-ij-2_3.tex}}
	\caption{Portion with variations in digraphs of cyclic permutation \ref{eq:validij2}, $(\tilde{J},\hat{J})$ in setting $(i,j), 3 \leq i \leq k-1$.}
	\label{fig:ij2}
\end{figure}

\begin{figure}[htb]
	\centering		
	\subcaptionbox{$i=3$}{\input{digraphs/subdigraph-ij-3_1.tex}} 					
	\subcaptionbox{$3 < i < k-1$}{\input{digraphs/subdigraph-ij-3_2.tex}}	
	\subcaptionbox{$i = k-1$}{\input{digraphs/subdigraph-ij-3_3.tex}}
	\caption{Portion with variations in digraphs of cyclic permutation \ref{eq:validij3}, $(\tilde{J},\hat{J})$ in setting $(i,j), 3 \leq i \leq k-1$.}
	\label{fig:ij3}
\end{figure}

\begin{figure}[htb]
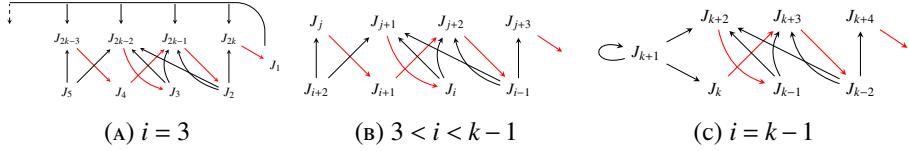

	\centering		
	\subcaptionbox{$i=3$}{\input{digraphs/subdigraph-ij-4_1.tex}} 					
	\subcaptionbox{$3 < i < k-1$}{\input{digraphs/subdigraph-ij-4_2.tex}}	
	\subcaptionbox{$i = k-1$\label{fig:ij4c}}{\input{digraphs/subdigraph-ij-4_3.tex}}
	\caption{Portion with variations in digraphs of cyclic permutation \ref{eq:validij4}, $(\tilde{J},\hat{J})$ in setting $(i,j), 3 \leq i \leq k-1$.}
	\label{fig:ij4}
\end{figure}

Note that all four cyclic permutations are simple. Next we analyze the digraphs to show that there are no primitive cycles of even length $\leq 2k-2$, which would imply by Straffin's lemma an existence of odd periodic orbit of length $\leq 2k-3$. From Lemms~\ref{thm:convStraffin} it then follows that the the $P$-linearization of the orbits \eqref{eq:validij1}, \eqref{eq:validij2}, \eqref{eq:validij3}, and \eqref{eq:validij4} present an example of continuous map with second minimal $(2k+1)$-orbit. The proof coincides with the similar proofs given in previous lemmas. 

		\begin{enumerate}
			\item Consider primitive cycles that contain $J_{1}$. Without loss of generality choose a starting vertex as $J_1$. First assume that cycle doesn't contain $J_{k+1}$. Due to presence of red edges any such cycle can be formed only by successfully adding to starting vertex $J_1$ pairs $(J_{p}, J_q)$, where  $q\in \{1,...,k-1\}$, $p\in \{k+2,...,2k\}$. Therefore, the length of the cycle (by counting $J_1$ twice) will be always an odd number. On the contrary, if cycle contains $J_{k+1}$, then besides the new pair $(J_{k+1}, J_k)$ or $(J_{k+1}, J_{k-1})$ there is a possibility to add just $J_{k+1}$ alone due to loop at $J_{k+1}$, and hence to build a primitive subgraph of even length. However, the smallest required even length is $2k$, and therefore no odd orbits of period smaller than $2k-1$ can be produced. 
			
			\item Consider primitive cycles that doesn't contain $J_{1}$. Since $J_1 \rightarrow J_{k+1}$ and $J_{k+1}\rightarrow J_{k+1}$ are only edges directed to $J_{k+1}$, we have to exclude $J_{k+1}$ from the primitive cycle unless it is a loop at $J_{k+1}$. But then any primitive cycle formed by the remaining intervals can be formed by adding some  of the indicated pairs $(J_{p}, J_q)$ to starting vertex, and therefore all are of odd length.
			 			\end{enumerate}

	\end{proof}
	
	\begin{lem}
		Each setting $(i,j+1)$ with $2< i <k$ and $j = 2k-i, k>3$ produces exactly $4$ second minimal cycles listed in cyclic permutations \eqref{eq:validij11}, \eqref{eq:validij12}, \eqref{eq:validij13}, and \eqref{eq:validij14}. Cyclic permutation \eqref{eq:validij12} repeats \eqref{eq:validij3} from Lemma~\ref{lem:settingij}.  If $i=3$, \eqref{eq:validij11} repeats the cyclic permutation \eqref{eq:case2-2km2-3}, revealed in Lemma~\ref{lem:case2-2km2}, and \eqref{eq:validij14} repeats the cyclic permutation \eqref{eq:case2-2km1-3}, revealed in Lemma~\ref{lem:case2-2km1}. The corresponding digraphs are presented in Figures~\ref{fig:validij11}, \ref{fig:validij12}, \ref{fig:validij13}, and \ref{fig:validij14} respectively.
		
		\small{
		\begin{equation}
			\left(\begin{array}{ccccccccccc}   
				1   & \cdots & i-1 & i   & i+1 & \cdots & j+1 & j+2 & j+3 & j+4 & \cdots \\ 
				k+1 & \cdots & j+3 & j+4 & j+2 & \cdots & i+1 & i-1 & i   & i-2 & \cdots \\
			\end{array} \right)
			\label{eq:validij11}
		\end{equation}}			
		
		\small{
		\begin{equation}
			\left(\begin{array}{ccccccccccc}   
				1   & \cdots & i-1 & i   & i+1 & \cdots & j+1 & j+2 & j+3 & j+4 & \cdots \\ 
				k+1 & \cdots & j+4 & j+2 & j+3 & \cdots & i+1 & i-1 & i   & i-2 & \cdots \\
			\end{array} \right)
			\label{eq:validij12}
		\end{equation}}		
			
		\small{
		\begin{equation}
			\left(\begin{array}{ccccccccccc}   
				1   & \cdots & i-1 & i   & i+1 & \cdots & j+1 & j+2 & j+3 & j+4 & \cdots \\ 
				k+1 & \cdots & j+4 & j+3 & j+2 & \cdots & i   & i-1 & i+1 & i-2 & \cdots \\
			\end{array} \right)
			\label{eq:validij13}
		\end{equation}}				
			
		\small{
		\begin{equation}
			\left(\begin{array}{ccccccccccc}   
				1   & \cdots & i-1 & i   & i+1 & \cdots & j+1 & j+2 & j+3 & j+4 & \cdots \\ 
				k+1 & \cdots & j+4 & j+3 & j+2 & \cdots & i+1 & i-1 & i-2 & i   & \cdots \\
			\end{array} \right)
			\label{eq:validij14}
		\end{equation}}				
		
		\input{digraphs/digraph-ij1-1.tex}		
		
		\input{digraphs/digraph-ij1-2.tex}	
		
		\input{digraphs/digraph-ij1-3.tex}	
		
		\input{digraphs/digraph-ij1-4.tex}			
		
		\label{lem:settingijp1}
	\end{lem}
	\begin{proof}
		We prove this by doing a case by case analysis of the general cyclic permutation listed in \eqref{eq:cycpermi2kmip1}.
		\small{
		\begin{equation}
			\left(\begin{array}{cccccccccc}   
				\cdots & i-1 & i & i+1 & \cdots & j+1 & j+2 & j+3 & j+4 & \cdots \\ 
				\cdots & < \begin{matrix} j+2\\ j+3 \end{matrix}, & j+4 > & \begin{matrix} j+3\\ j+2 \end{matrix} & \cdots & \begin{matrix} i+1\\ i \end{matrix} & i-1 & < \begin{matrix} i\\ i+1 \end{matrix}, & i-2 > & \cdots \\
			\end{array} \right)
			\label{eq:cycpermi2kmip1}
		\end{equation}}		
		
		\begin{enumerate}
			\item If $f(i-1)=j+2\Rightarrow 2$-suborbit $\left \{ i-1, j+2 \right \}$, a contradiction.
			\item If $f(i-1)=j+3\Rightarrow f(i)=j+4, f(i+1)=j+2$
			\begin{enumerate}
				\item If $f(j+3)=i\Rightarrow f(j+1)=i+1, f(j+4)=i-2$, we have a second minimal $(2k+1)$ orbit given in \eqref{eq:validij11} with topological structure max-min-max-min-max, if $i>3$. If $i=3$ then we have a second minimal $2k+1$-orbit \eqref{eq:case2-2km2-3} with topological structure max-min-max revealed in Lemma~\ref{lem:case2-2km2}. 
				\item If $f(j+3)=i+1\Rightarrow 4$-suborbit $\left \{ i-1, i+1, j+2, j+3 \right \}$, a contradiction.
				\item If $f(j+3)=i-2$ and $f(j+4)=i\Rightarrow 2$-suborbit $\left \{ i, j+4 \right \}$, a contradiction.
				\item If $f(j+3)=i-2$ and $f(j+4)=i+1\Rightarrow f(j+1)=i$.  Following the proof of the Lemma~\ref{lem:JhatJtilde} it follows that for $2<i<k-1$ the digraph of the cyclic permutation contains a primitive subgraph 
				
				$$J_{r_{1}}\rightarrow J_{r_{2}}\rightarrow\dots\rightarrow J_{r_{w-3}}\rightarrow\tilde{J}\rightarrow\hat{J}\rightarrow J_{r_{w+2}}\rightarrow J_{r_{w+3}}\rightarrow\dots\rightarrow J_{r_{2k-2}}\rightarrow J_{r_{1}}\rightarrow J_{r_{1}}$$
				
				and for $i=k-1$ the digraph of the cyclic permutation contains a primitive subgraph
				
				$$J_{r_{1}}\rightarrow \tilde{J}\rightarrow \hat{J}\rightarrow J_{r_{6}}\rightarrow \dots \rightarrow J_{r_{2k-2}}\rightarrow J_{r_{1}}\rightarrow J_{r_{1}}$$
				
				both of which have length $2k-2$. By Lemma~\ref{thm:straffin}, a periodic orbit of period $2k-3$ must exist, which is a contradiction.
			\end{enumerate}
			\item If $f(i)=j+2\Rightarrow f(i-1)=j+4, f(i+1)=j+3$
			\begin{enumerate}
				\item If $f(j+3)=i\Rightarrow f(j+1)=i+1, f(j+4)=i-2$, we have a second minimal $(2k+1)$ orbit given in \eqref{eq:validij12} with topological structure max-min-max-min-max. Cyclic permutation \eqref{eq:validij12} repeats \eqref{eq:validij3} from Lemma~\ref{lem:settingij}.
				\item If $f(j+3)=i+1\Rightarrow 2$-suborbit $\left \{ i+1, j+3 \right \}$, a contradiction.
				\item If $f(j+3)=i-2$ and $f(j+4)=i+1\Rightarrow f(j+1)=i$. This implies a cyclic permutation whose digraph contains primitive subgraph of length $2k-2$. The proof coincides with the proof given above in the case (2d). By Lemma~\ref{thm:straffin}, a periodic orbit of period $2k-3$ must exist, which is a contradiction.				
				\item If $f(j+3)=i-2$ and $f(j+4)=i\Rightarrow f(j+1)=i+1\Rightarrow 4$-suborbit $\left \{ i, i-1, j+2, j+4 \right \}$, a contradiction.
			\end{enumerate}
			\item If $f(i)=j+3\Rightarrow f(i-1)=j+4, f(i+1)=j+2$
			\begin{enumerate}
				\item If $f(j+3)=i \Rightarrow 2$-suborbit $\left \{ i, j+3 \right \}$, a contradiction.
				\item If $f(j+3)=i+1\Rightarrow f(j+1)=i, f(j+4)=i-2$, we have a second minimal $(2k+1)$ orbit given in \eqref{eq:validij13} with topological structure max-min-max.
				\item If $f(j+3)=i-2$ and $f(j+4)=i+1 \Rightarrow 4$-suborbit $\left \{ i-1, i+1, j+2, j+4 \right \}$, a contradiction.
				\item If $f(j+3)=i-2$ and $f(j+4)=i\Rightarrow f(j+1)=i+1$, we have a second minimal $(2k+1)$ orbit given in \eqref{eq:validij14} with topological structure max-min-max, if $i>3$. If $i=3$ then we have a second minimal $2k+1$-orbit \eqref{eq:case2-2km1-3} with topological structure max-min revealed in Lemma~\ref{lem:case2-2km1}.     
			\end{enumerate}
		\end{enumerate}
		
As $i$ varies between $3$ and $k-1$, the structure of the digraphs associated with the cyclic permutations changes. In particular, for a given cyclic permutation, varying $i$ from $3$ to $k-1$ shifts the region of variations from the right to left ends of the digraph. We demonstrate this in Fig.~\ref{fig:ij11} through Fig.~\ref{fig:ij14}. Observe that Fig.~\ref{fig:ij3} and Fig.~\ref{fig:ij12} are identical. Note that in these subgraphs, with the exception of Fig.~\ref{fig:ij11a} where $J_{1}\not\rightarrow J_{2k}$, we have $J_{1}\rightarrow J_{k+1},\dots,J_{2k}$.

\begin{figure}[htb]
	\centering		
	\subcaptionbox{$i=3$\label{fig:ij11a}}{\input{digraphs/subdigraph-ij1-1_1.tex}} 					
	\subcaptionbox{$3 < i < k-1$}{\input{digraphs/subdigraph-ij1-1_2.tex}}	
	\subcaptionbox{$i = k-1$}{\input{digraphs/subdigraph-ij1-1_3.tex}}
	\caption{Portion with variations in digraphs of cyclic permutation \ref{eq:validij11}, $(\tilde{J},\hat{J})$ in setting $(i,j), 3 \leq i \leq k-1$.}
	\label{fig:ij11}
\end{figure}

\begin{figure}[htb]
	\centering		
	\subcaptionbox{$i=3$}{\input{digraphs/subdigraph-ij1-2_1.tex}} 					
	\subcaptionbox{$3 < i < k-1$}{\input{digraphs/subdigraph-ij1-2_2.tex}}	
	\subcaptionbox{$i = k-1$}{\input{digraphs/subdigraph-ij1-2_3.tex}}
	\caption{Portion with variations in digraphs of cyclic permutation \ref{eq:validij12}, $(\tilde{J},\hat{J})$ in setting $(i,j), 3 \leq i \leq k-1$.}
	\label{fig:ij12}
\end{figure}

\begin{figure}[htb]
	\centering		
	\subcaptionbox{$i=3$}{\input{digraphs/subdigraph-ij1-3_1.tex}} 					
	\subcaptionbox{$3 < i < k-1$}{\input{digraphs/subdigraph-ij1-3_2.tex}}	
	\subcaptionbox{$i = k-1$}{\input{digraphs/subdigraph-ij1-3_3.tex}}
	\caption{Portion with variations in digraphs of cyclic permutation \ref{eq:validij13}, $(\tilde{J},\hat{J})$ in setting $(i,j), 3 \leq i \leq k-1$.}
	\label{fig:ij13}
\end{figure}

\begin{figure}[htb]
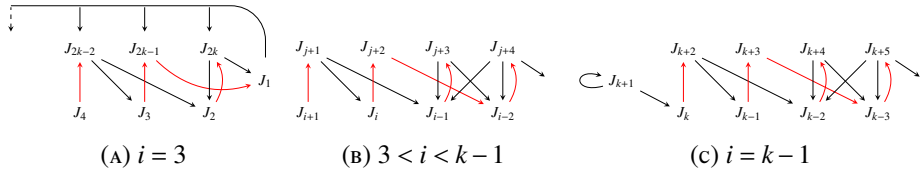

	\centering		
	\subcaptionbox{$i=3$\label{fig:ij14c}}{\input{digraphs/subdigraph-ij1-4_1.tex}} 					
	\subcaptionbox{$3 < i < k-1$}{\input{digraphs/subdigraph-ij1-4_2.tex}}	
	\subcaptionbox{$i = k-1$}{\input{digraphs/subdigraph-ij1-4_3.tex}}
	\caption{Portion with variations in digraphs of cyclic permutation \ref{eq:validij14}, $(\tilde{J},\hat{J})$ in setting $(i,j), 3 \leq i \leq k-1$.}
	\label{fig:ij14}
\end{figure}		
		
Note that all four cyclic permutations are simple. Finally, we aim to analyze the digraphs and demonstrate that there are no primitive cycles of even length $\leq 2k-2$, which would imply by Straffin's lemma an existence of odd periodic orbit of length $\leq 2k-3$. From Lemms~\ref{thm:convStraffin} it then follows that the the $P$-linearization of the orbits \eqref{eq:validij11}, \eqref{eq:validij12}, \eqref{eq:validij13}, and \eqref{eq:validij14} present an example of continuous map with second minimal $(2k+1)$-orbit. The proof coincides with the similar proof given in Lemma~\ref{lem:settingij}. 
	
	\end{proof}
	
	\begin{lem}
		Placing $(\tilde{J}, \hat{J})$ in relative positions $(k, k+1)$ produces exactly $4$ second minimal cycles listed in cyclic permutations \eqref{eq:validkk1}, \eqref{eq:validkk2}, \eqref{eq:validkk3}, and \eqref{eq:validkk4}. The corresponding digraphs are presented in Figures~\ref{fig:validkk1}, \ref{fig:validkk2}, \ref{fig:validkk3}, and \ref{fig:validkk4} respectively.
		
		\small{
		\begin{equation}
			\left(\begin{array}{ccccccccccc}   
				1   & 2 	 & \cdots & k-1 & k   & k+1 & k+2 & k+3 & k+4 & \cdots & 2k+1 \\ 
				k+1 & 2k+1 & \cdots & k+4 & k+2 & k+3 & k-1 & k   & k-2 & \cdots & 1\\
			\end{array} \right)
			\label{eq:validkk1}
		\end{equation}}
	
		\small{
		\begin{equation}
			\left(\begin{array}{ccccccccccccc}   
				1   & 2 	 & \cdots & k-1 & k   & k+1 & k+2 & k+3 & k+4 & k+5 & \cdots & 2k+1  \\ 
				k+1 & 2k+1 & \cdots & k+4 & k+3 & k+2 & k-1 & k-2 & k   & k-3 & \cdots & 1 \\
			\end{array} \right)
			\label{eq:validkk2}
		\end{equation}}			
		
		\small{
		\begin{equation}
			\left(\begin{array}{cccccccccccc}   
				1 & 2 	 & \cdots & k-1 & k   & k+1 & k+2 & k+3 & k+4 & \cdots & 2k+1  \\ 
				k & 2k+1 & \cdots & k+4 & k+3 & k+2 & k-1 & k+1 & k-2 & \cdots & 1 \\
			\end{array} \right)
			\label{eq:validkk3}
		\end{equation}}		
			
		\small{
		\begin{equation}
			\left(\begin{array}{cccccccccccc}   
				1   & 2 	 & \cdots & k-1 & k   & k+1 & k+2 & k+3 & k+4 & \cdots & 2k+1  \\ 
				k+1 & 2k+1 &\cdots & k+3 & k+4 & k+2 & k-1 & k   & k-2 & \cdots  & 1 \\
			\end{array} \right)
			\label{eq:validkk4}
		\end{equation}}	
		
		\input{digraphs/digraphk-k1-1.tex}	
	
		\input{digraphs/digraphk-k1-2.tex}	
	
		\input{digraphs/digraphk-k1-3.tex}	
	
		\input{digraphs/digraphk-k1-4.tex}		
	
		\label{lem:settingkk1}
	\end{lem}
	\begin{proof}
		We prove this by doing a case by case analysis of the general cyclic permutation listed in \eqref{eq:cycpermi2kmip1}. Note that in the frame of notation introduced in the proof of Lemma~\ref{lem:JhatJtilde} we have $w=2$;
		\small{
		\begin{equation}
			\left(\begin{array}{ccccccccc}   
				1 & \cdots & k-1 & k & k+1 & k+2 & k+3 & k+4 & \cdots \\ 
				\begin{matrix} k+1\\ k \end{matrix}  & \cdots & < k+4, & \begin{matrix} k+3\\ k+2 \end{matrix} > & \begin{matrix} k+2\\ k+3 \end{matrix} & k-1 & < k-2 & \begin{matrix} k\\ k+1 \end{matrix} > & \cdots \\
			\end{array} \right)
			\label{eq:cycpermi2kmipk}
		\end{equation}}			
		
		\begin{enumerate}
			\item If $f(k)=k+2\Rightarrow f(k-1)=k+4, f(k+1)=k+3$
			\begin{enumerate}
				\item If $f(k+4)=k\Rightarrow f(1)=k+1, f(k+3)=k-2 \Rightarrow 4$-suborbit $\left \{ k-1, k, k+2, k+4 \right \}$, a contradiction.
				\item If $f(k+4)=k+1\Rightarrow f(1)=k, f(k+3)=k-2$, then for $k>3$ we have the primitive subgraph
					$$\tilde{J}\rightarrow \hat{J}\rightarrow J_{r_{4}}\rightarrow\dots\rightarrow J_{r_{2k-2}}\rightarrow \tilde{J}$$
					of length $2k-2$. Lemma~\ref{thm:straffin} implies the existence of $2k-3$-orbit, which is a contradiction. 				\item If $f(k+4)=k-2$ and $f(k+3)=k$, we have a second minimal $(2k+1)$ orbit given in \eqref{eq:validkk1} with topological structure max-min-max-min-max.
				\item If $f(k+4)=k-2$ and $f(k+3)=k+1\Rightarrow 2$-suborbit $\left \{ k+1, k+3 \right \}$, a contradiction.
			\end{enumerate}
			\item If $f(k)=k+3\Rightarrow f(k-1)=k+4, f(k+1)=k+2$
			\begin{enumerate}
				\item If $f(k+4)=k\Rightarrow f(1)=k+1, f(k+3)=k-2$, we have a second minimal $(2k+1)$ orbit given in \eqref{eq:validkk2} with topological structure max-min-max. 
				\item If $f(k+4)=k+1\Rightarrow f(1)=k, f(k+3)=k-2 \Rightarrow 4$-suborbit $\left \{ k-1, k+1, k+2, k+4 \right \}$, a contradiction.
				\item If $f(k+4)=k-2$ and $f(k+3)=k \Rightarrow 2$-suborbit $\left \{ k, k+3 \right \}$, a contradiction.
				\item If $f(k+4)=k-2$ and $f(k+3)=k+1\Rightarrow f(1)=k$, we have a second minimal $(2k+1)$ orbit given in \eqref{eq:validkk3} with topological structure max-min-max.
			\end{enumerate}
			\item If $f(k-1)=k+2\Rightarrow 2$-suborbit $\left \{ k-1, k+2 \right \}$, a contradiction.
			\item If $f(k-1)=k+3\Rightarrow f(k)=k+4, f(k+1)=k+2$
			\begin{enumerate}
				\item If $f(k+4)=k \Rightarrow 2$-suborbit $\left \{ k, k+4 \right \}$, a contradiction.
				\item If $f(k+4)=k+1\Rightarrow f(1)=k, f(k+3)=k-2$, then for $k>3$ we have the primitive subgraph
					$$\tilde{J}\rightarrow \hat{J}\rightarrow J_{r_{4}}\rightarrow\dots\rightarrow J_{r_{2k-2}}\rightarrow \tilde{J}$$
					of length $2k-2$ which leads to contradiction as in case (1b).
									\item If $f(k+4)=k-2$ and $f(k+3)=k \Rightarrow f(1)=k+1$, we have a second minimal $(2k+1)$ orbit given in \eqref{eq:validkk4} with topological structure max-min-max-min-max. 
				\item If $f(k+4)=k-2$ and $f(k+3)=k+1 \Rightarrow 4$-suborbit $\left \{ k-1, k+1, k+2, k+3 \right \}$, a contradiction.
			\end{enumerate}
		\end{enumerate}

Note that all four cyclic permutations are simple. Finally, we aim to analyze the digraphs and demonstrate that there are no primitive cycles of even length $\leq 2k-2$, which would imply by Straffin's lemma an existence of odd periodic orbit of length $\leq 2k-3$. From Lemma~\ref{thm:convStraffin} it then follows that the the $P$-linearization of the orbits \eqref{eq:validkk1}, \eqref{eq:validkk2}, \eqref{eq:validkk3}, and \eqref{eq:validkk4} present an example of continuous map with second minimal $(2k+1)$-orbit. The proof coincides with the similar proof given in Lemma~\ref{lem:settingij}. 		
			\end{proof}

		\begin{figure}%
		\centering
		\begin{tikzpicture}[
			mymat/.style={
			matrix of math nodes,
			text height=2.5ex,
			text depth=0.75ex,
			text width=18.25ex,
			align=center,
			column sep=-\pgflinewidth,
			row sep=-\pgflinewidth},
			nodes = draw,
			semithick
		 ]
										
			\matrix[ampersand replacement=\&,mymat,anchor=west,row 2/.style={nodes={draw,fill=gray!30}}] at (0,0) (ij)
			{
									 \&  				 \& (i-1, 2k-i+1) \& (i-1, 2k-i+2) \\
									 \& 	(i, 2k-i) \& (i, 2k-i+1)   \& \\
				(i+1, 2k-i-1) \& (i+1, 2k-i) \&  				   \& \\
			};
			\node (1) [below=0.2cm of ij-1-3, draw=none] {};
			\node (22) [above=0.2cm of ij-2-3, draw=none] {};
			
			\node (23) [left=0.6cm of ij-2-3, draw=none] {};
			\node (24) [right=0.3cm of ij-2-2, draw=none] {};
			
			\node(21) [below=0.2cm of ij-2-2, draw=none] {};
			\node(3) [above=0.2cm of ij-3-2, draw=none] {};
			
			\begin{scope}[shorten <= -2pt]
				\draw[->, blue] (ij-1-3.west) -- (ij-2-2.north);
				\draw[->, red] (ij-2-2.west) -- (ij-3-1.north);
				\draw[->, blue] (ij-1-4.south) -- (ij-2-3.east);
				\draw[->, red] (ij-2-3.south) -- (ij-3-2.east);
				
				\draw[<-, blue] (1) -- (22);
				\draw[<-, red] (23) -- (24);
				\draw[<-, red] (21) -- (3);
			\end{scope}
		\end{tikzpicture}	
		\caption{The cyclic permutation sharing mechanism for settings $(i,2k-i)$ and $(i,2k-i+1)$. Red arrows indicate cyclic permutations originating at a node and blue arrows indicate cyclic permutations shared from above.} 
		\label{fig:sharingMech}
	\end{figure}
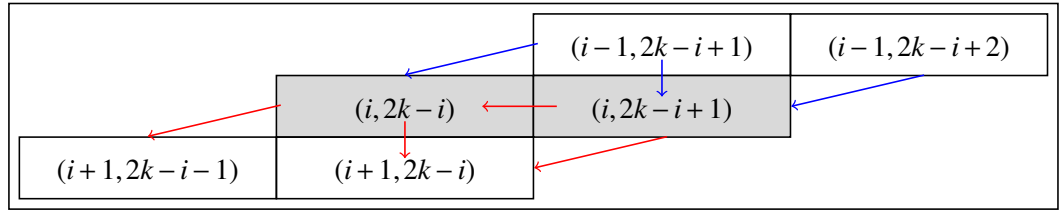	
	
\begin{lem}
		Let $\tilde{J}, \hat{J}$ be in setting $(i, 2k-i)$ for $2 < i < k$. For fixed $i$, this setting shares one cyclic permutation with the setting $(i-1, 2k-i+1)$ and another cyclic permutation with the setting $(i, 2k-i+1)$. When $i=k-1$, the setting $(k-1, k+1)$ shares a cyclic permutation with the case when $m=2k-1$ from Lemma~\ref{lem:m2km1}.
		\label{lem:sharingHoriz}
	\end{lem}
	\begin{proof}
		The proof is by direct comparison. Note that if $i>3$ the cyclic permutation \eqref{eq:validij4}  is transformed to \eqref{eq:validij1} after substitution $(i,j)$ with $(i-1,j+1)$. If $i=3$, \eqref{eq:validij1} repeats the cyclic permutation \eqref{eq:case2-2km2-4}. Therefore, the setting $(i, 2k-i)$ shares one cyclic permutation with $(i-1, 2k-i+1)$. We can also see that the cyclic permutations \eqref{eq:validij3} and \eqref{eq:validij12} are identical. So the setting $(i, 2k-i)$ also shares a cyclic permutation with the setting $(i, 2k-i+1)$. When $i=k-1$, the cyclic permutation \eqref{eq:validij4} and \eqref{eq:len2kcase11cyc} are identical.
		
		\end{proof}
	
	\begin{lem}
		Let $\tilde{J}, \hat{J}$ be in setting $(i, 2k-i+1)$ for $3\leq i \leq k$. For fixed $i$, this setting shares one cyclic permutation with the setting $(i-1, 2k-i+1)$ and another cyclic permutation with the setting $(i-1, 2k-i+2)$. 
		\label{lem:sharingVert}
	\end{lem}
	\begin{proof}
		The proof is once again by direct comparison. If $3 < i < k$, the substitution $(i,j)$ with $(i-1,j+1)$ in \eqref{eq:validij2} implies the cyclic permutation \eqref{eq:validij11} from Lemma~\ref{lem:settingijp1}. If $i=3$, \eqref{eq:validij11} repeats the cyclic permutation \eqref{eq:case2-2km2-3}, revealed in Lemma~\ref{lem:case2-2km2}. If $i=k$, choose $i=k-1$ in the cyclic permutation \eqref{eq:validij2} and observe that it is identical to \eqref{eq:validkk4}.This proves sharing with setting $(i-1, 2k-i+1)$. Similarly, if $i>3$ the substitution $(i,j)$ with $(i-1,j+1)$ cyclic permutation \eqref{eq:validij13} is transformed to the cyclic permutation \eqref{eq:validij14}. If $i=3$ \eqref{eq:validij14} repeats the cyclic permutation \eqref{eq:case2-2km1-3}, revealed in Lemma~\ref{lem:case2-2km1}. If $i=k$, then by choosing $i=k-1$ in \eqref{eq:validij13} we see that it is identical to \eqref{eq:validkk2}. This confirms sharing with the setting $(i-1, 2k-i+2)$. 
	\end{proof}

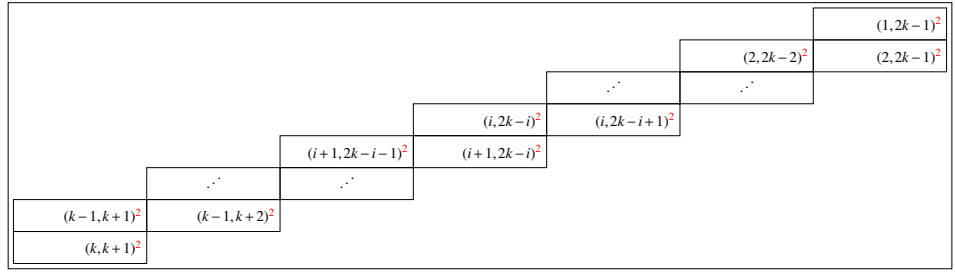
\begin{figure}[htb]	
	\centering				
	\resizebox{\textwidth}{!}{
	\begin{tikzpicture}[
			mymat/.style={
			matrix of math nodes,
			text height=2.5ex,
			text depth=0.75ex,
			text width=18.25ex,
			align=right,
			column sep=-\pgflinewidth,
			row sep=-\pgflinewidth},
			nodes = draw,
			vertex/.style={draw,circle}
		 ]
										
			\matrix[ampersand replacement=\&,mymat,anchor=west,row 3/.style={align=center},row 6/.style={align=center}] at (0,0) (ijgrid)
			{
				 \&  \&  \&  \&  			 \&					\& (1,2k-1)^{\color{Red} 2} \\
				 \&  \&  \&  \&  			 \& (2,2k-2)^{\color{Red} 2}	\& (2,2k-1)^{\color{Red} 2} \\
				 \&  \&  \&  \& \iddots \& \iddots 	\&  \\
				 \&  \&  \& (i,2k-i)^{\color{Red} 2} \& (i,2k-i+1)^{\color{Red} 2} \&  \&  \\
				 \&  \& (i+1, 2k-i-1)^{\color{Red} 2} \& (i+1, 2k-i)^{\color{Red} 2} \&  \&  \&  \\
				 \& \iddots \& \iddots \&  \&  \&  \&  \\
				(k-1, k+1)^{\color{Red} 2} \& (k-1,k+2)^{\color{Red} 2} \&  \&  \&  \&  \&  \\
				(k,k+1)^{\color{Red} 2} \&  \&  \&  \&  \&  \& \\
			};
			
		\end{tikzpicture}	} 		
	\caption{Demonstration of counting of distinct cyclic permutations per setting; here indicated with red numbers}
	\label{fig:sharingCount}
\end{figure}	

\begin{table}%
	\centering
	\begin{tabular}{r|c|r}\toprule
		Topological Structure & Count & Permutation \\ \midrule
		max & $1$ & \eqref{eq:case2-2km2-4} \\
		min-max & $1$ & \eqref{eq:case1-2km1-2} \\
		min-max-min & $1$ & \eqref{eq:case1-2km1-1} \\
		max-min & $2$ & \eqref{eq:case2-2km1-2},\eqref{eq:case2-2km1-3} \\
		max-min-max & $2k-3$ & \eqref{eq:len2kcase22cyc},\eqref{eq:case2-2km2-3},\eqref{eq:validkk3},({\color{Green}\eqref{eq:validij1}},{\color{Green}\eqref{eq:validij4}}),({\color{Green}\eqref{eq:validij13}},{\color{Green}\eqref{eq:validij14}}) \\
		max-min-max-min-max & $2k-5$ & \eqref{eq:validkk1}, ({\color{Green}\eqref{eq:validij2}},{\color{Green}\eqref{eq:validij11}}),({\color{Green}\eqref{eq:validij3}},{\color{Green}\eqref{eq:validij12}}) \\ \bottomrule
	\end{tabular}
\caption{Counts for topological structure of second minimal odd periodic orbits. {\color{Green}Green} entries correspond to permutations in settings $(i,2k-i),(i,2k-i+1)$ for $2<i<k$ with $k>3$. Parentheses indicate permutations that may be shared.}
\label{tab:topStruct}
\end{table}
The sharing mechanism provided in Lemma~\ref{lem:sharingHoriz} and Lemma~\ref{lem:sharingVert} is illustrated in \ref{fig:sharingMech}. Each setting $(i,2k-i)$ and $(i,2k-i+1), 2<i<k$, $(k,k+1)$ contain exactly $4$ second minimal cyclic permutations, which  are shared with neighboring settings. In particular, for setting $(i,2k-i)$, two cyclic permutations are inherited from its two neighbors immediately to the right, and other two are shared with neighbors immediately down. Observe, that we have also demonstrated the sharing extending to the cases $(1,2k-1)$, $(2,2k-1)$, $(2, 2k-2)$, and $(k,k+1)$. To count all the different second minimal cyclic permutations we start in the upper right corner $(1,2k-1)$ of the table in Fig.~\ref{fig:sharingCount} and work our way down to the bottom left corner $(k,k+1)$ by successively moving down and left. Due to sharing mechanism, the number of new second minimal cyclic permutations produced in each setting is equal to 2 (written as a superscript to the setting). There are $k-1$ columns, each with $4$ distinct cycles giving a total of $4(k-1)$ distinct cyclic permutations. Adding the $1$ remaining permutation from Lemma \ref{lem:m2km1} we have the required number, $4k-3$, of second minimal cyclic permutations of period $2k+1$, unique up to an inverse. They are all simple positive type according to the Definition~\ref{en:simpletypea}. All the inverse cyclic permutations of the constructed $4k-3$ orbits constitute all simple negative type second minimal $2k+1$-orbits. Finally, we count the different types of topological structure of all the $4k-3$ positive type second minimal orbits and present the results in Table~\ref{tab:topStruct}. The inverse cyclic permutations have the same topological structures with "max" and "min" exchanged.

\end{proof}

\bibliographystyle{plain}	
	\bibliography{GeneralSecondMinimalProofs}

\end{document}